\DeclareMathOperator{\M}{\mathbb M}
\newtheorem {theorem}    {Theorem}[section]
\newtheorem {lemma}      [theorem]    {Lemma}
\newtheorem {corollary}  [theorem]    {Corollary}
\newtheorem {proposition}[theorem]    {Proposition}
\theoremstyle{definition}
\newtheorem*{definition}{Definition}
\theoremstyle{remark}
\DeclareMathOperator{\SL}{SL}
\DeclareMathOperator{\GL}{GL}
\DeclareMathOperator{\Aut}{Aut}
\DeclareMathOperator{\Ad}{Ad}
\DeclareMathOperator{\ad}{ad}
\DeclareMathOperator{\re}{{re}}
\def\wgal{\widetilde w}
\renewcommand{\a}{\alpha}
\renewcommand{\b}{\beta}
\newcommand{\K}{{\mathbb K}}
\newcommand{\Z}{{\mathbb Z}}
\newcommand{\R}{{\mathbb R}}
\newcommand{\C}{{\mathbb C}}
\newcommand{\N}{{\mathbb N}}
\newcommand{\Q}{{\mathbb Q}}
\author{Lisa Carbone}
\email{lisa.carbone@rutgers.edu}
\author{Elizabeth Jurisich}
\email{jurisiche@cofc.edu}
\title{A MAGNUS GROUP CONSTRUCTION FOR A CLASS OF BORCHERDS ALGEBRAS}
\begin{document}

\begin{abstract}
    We construct a group associated to a class of Borcherds algebras that 
    admit a direct sum decomposition into a  Kac--Moody (or semi-simple) subalgebra and a pair of free Lie subalgebras. 
    Such Borcherds algebras have no mutually orthogonal  imaginary simple roots.
    Our group is a semi-direct product of a Kac--Moody (or semi-simple) group and a Magnus group of invertible formal power series corresponding to a basis of a certain highest weight module determined by the simple imaginary roots. We show that our group is independent of this choice of basis, up to isomorphism. We apply our construction to a number of concrete examples, such as certain Borcherds algebras formed using root lattices of hyperbolic Kac--Moody algebras, the Monster Lie algebra,  Monstrous Lie algebras of Fricke type and the gnome Lie algebra.
\end{abstract}

\maketitle

\section{Introduction}

Borcherds algebras are infinite dimensional Lie algebras which are a generalization of Kac--Moody algebras originally studied by Richard Borcherds in the context of Monstrous Moonshine \cite{BoInvent}. 

One obstruction to associating a Kac--Moody-type group to a Borcherds algebra $\mathfrak g$ is the appearance of  imaginary simple roots, whose root vectors  may not act locally nilpotently on a given standard irreducible module of $\mathfrak g$ \cite{JURISICH2004149}.

For the class of Borcherds algebras that admit a direct sum decomposition into a  Kac--Moody (or semi-simple) subalgebra $\mathfrak g_J$ and  free Lie subalgebras $\mathfrak u^\pm$ (as in Theorem 5.1 of \cite{JurJPAA},  see also Theorem~\ref{JurThm}), here we construct an associated Lie group analog which  is a semi-direct product of a Kac--Moody (or semi-simple) group and a Magnus group. The latter is a group of  formal power series. A key property of this class of Borcherds algebras is that they have  no mutually orthogonal  imaginary simple roots \cite{JurJPAA}.

 The construction presented in this work provides a solution to the fundamental problem of associating a group-theoretic analog to Borcherds algebras.   Since Borcherds algebras contain imaginary simple roots, and imaginary root vectors do not act locally nilpotently on standard modules of interest, the standard group constructions fail (see \cite{CJM} for more exposition and detail).

Let $I$ be an index set for all simple roots and let $J\subset I$ be an index set for the real simple roots. Associated to the decomposition of the Borcherds algebra $$\mathfrak g = \mathfrak u^+ \oplus (\mathfrak g_J + \mathfrak h) \oplus \mathfrak u^-$$ of \cite{JurJPAA}, we construct a group $G$, which is the semi-direct product $$G=   G(S^\prime) \rtimes G_J. $$ 
The set $S^\prime$ is a  choice of basis for a space $V^\prime$, defined as  $V^\prime=\coprod_{i \in I\backslash J}{ U}(\mathfrak n^-_J)\cdot f_i$, where $\mathfrak  u^-$ is the free subalgebra $L(V')$, $\mathfrak{n}^-_J$ is the nilpotent subalgebra corresponding to the negative roots and $f_i$ are Chevalley generators corresponding to the imaginary simple roots. The group $G_J$ is a Kac--Moody (or semi-simple) group associated to $\mathfrak g_J$, relative to a representation of $\mathfrak g_J$ on the tensor algebra $W= T(V^\prime)$. The group  $G(S^\prime)= \exp(\widehat{L}(S^\prime))$ is a group of invertible formal power series in non-commuting variables from $S^\prime$ where $\widehat{L}(S^\prime)$  is a completion of the free subalgebra $\mathfrak u^-$.  Our construction of the group $G$ is invariant, up to isomorphism, under the choice of basis of $S^\prime$ for $V^\prime$ (see Theorem~\ref{independent}).



To construct the group $G$, we reinterpret the $\mathfrak g_J$-module $T(V')$ as a free associative algebra $A(S^\prime)$ on the basis $S^\prime$ of $V'$. In this way,  we transfer the module action to an action by derivations on formal polynomials.

In Subsection~\ref{Magnus}, we  recall the Magnus group construction associated with a free Lie algebra, then apply the construction to free subalgebra $\mathfrak u^-$ of $\mathfrak g$. The Magnus group turns out to be the natural group-theoretic object for encoding the free Lie subalgebra $\mathfrak{u}^-$. As the group of invertible formal power series in non-commuting variables, the Magnus group $G(S')$ provides a formal completion that ensures the exponential map is well-defined even in the absence of local nilpotency.

In Subsection~\ref{KMgroup},  we recall the construction of an adjoint Kac--Moody group relative to  a representation of $\mathfrak g_J$ on the tensor algebra $W= T(V^\prime)$. In Subsection~\ref{Polynomials}, we  define an action of $\mathfrak{g}_J$ on the elements of $T(V^\prime)$, viewed  as formal polynomials. 
The action of $\mathfrak g_J$ on $\mathfrak u^-$ as a $\mathfrak g_J$-module induces an action of the group $G_J$ on the  group $G(S^\prime)$.

In Subsection~\ref{SDproduct} we put these ingredients together to assemble the semi-direct product $G=   G(S^\prime) \rtimes G_J. $ 
 We thus obtain a group that mirrors the  decomposition $\mathfrak{g} \cong \mathfrak{u}^- \oplus (\mathfrak{g}_J + \mathfrak{h}) \oplus \mathfrak{u}^+$ (though we only use the negative imaginary roots for our group construction).

 Finally, in Section~\ref{examples}, 
we apply our construction to the following concrete examples:
\begin{itemize}
\item Certain Borcherds algebras formed using root lattices of hyperbolic Kac--Moody algebras:\\
    - A Borcherds algebra associated with the hyperbolic lattice \( H(3) \). \newline
    - A Borcherds algebra associated with the hyperbolic  lattice $E_{10}$. 
    \item The Monster Lie algebra, whose structure plays a key role in Monstrous Moonshine.
      \item Monstrous Lie algebras of Fricke type as in \cite{CarDuke}.
    \item The gnome Lie algebra of \cite{BGN98}.

\end{itemize}

 In comparison with minimal (incomplete) and maximal (complete) Kac--Moody groups, our semi-direct product $G = G(S') \rtimes G_J$ is a `hybrid'. The component $G(S^\prime)= \exp(\widehat{L}(S^\prime)$ corresponding to the imaginary roots is associated to the completion $\widehat{L}(S^\prime))$ and may be viewed as a complete group, while the component $G_J$ corresponding to the real roots is incomplete.  If a fully complete group is needed, one may take a further completion of the Kac--Moody group $G_J$.

 Our construction raises many new questions. For example, does $G = G(S') \rtimes G_J$ carry a natural topology or pro-algebraic structure induced by the completion? We hope to consider this question in future work.

\section{Preliminary Material}


We denote the non-negative integers by $\mathbb N$, and $\mathbb K\subset \mathbb C$ a  subfield of $\mathbb C$ such as $\Q$, $\R$ or $\C$.




  
  \subsection{Universal Enveloping Algebras}

Let $L$ be a Lie algebra over $\mathbb K$.  Let $U(L)$ denote the universal enveloping algebra of $L$. 

Let g be a Lie algebra and U(g) be its universal enveloping algebra.
Let ${s_1,s_2,…,s_n } $ be an ordered basis for the Lie algebra $\mathfrak g$. We will use a Poincaré-Birkhoff-Witt basis of $U(\mathfrak g)$ denoted
\begin{equation}
s_{i_1}^{a_1} s_{i_2}^{a_2} \cdots s_{i_k}^{a_k}
\end{equation}
where $$1 \leq i_1<i_2
 <\ldots <i_k
 \leq n $$ and $a_j \geq 0 $ are integers.



Given a countable set $X$, let $L(X)$ denote the free Lie algebra generated by $X$.  For a vector space $V$ with basis $X$ we also denote $L(X)$ by $L(V)$. The universal enveloping algebra of $L(X)$ is the free associative algebra $A(X)$, which is isomorphic to the tensor algebra $T(V)$ for $V= \text{Span}
\{X\}$. Recall that 
$$T(V) = \bigoplus_{n=0}^{\infty} T^n(V)$$
where $T^0(V) = \mathbb K$, $T^1(V) = V$, $T^n(V) = V^{\otimes n}$ for $n \ge 1$.

The completion of $T(V)$, denoted $\widehat{T}(V)$, is the algebra of formal power series in non-commuting variables corresponding to  basis elements of $V$:
\[ \widehat{T}(V) = \prod_{n=0}^{\infty} T^n(V). \]
Elements of $\widehat{T}(V)$ are formal infinite sums of the form
$ \sum_{n=0}^{\infty} v_n$ where $v_n \in T^n(V)$. This determines an $\N$-grading where $(uv)_n = \sum_{m=0}^{n} u_m v_{n-m}$.

Since $\widehat{T}(V)$ is identified with the direct product $\prod_{n=0}^{\infty} T^n(V)$ as vector spaces, the topology coincides with the {product topology}, where each component  $T^n(V)$ is equipped with the {discrete topology}.

\subsection{Borcherds algebras}

Borcherds algebras, also called  generalized Kac--Moody algebras, can be defined using a generalization of a Cartan matrix $A$ \cite{B88}.  We assume here that the matrix $A$ is symmetric, but the results hold for symmetrizable matrices. 

Let $I$ be a  countable index set and let $A = (a_{ij})_{i,j \in
I}$ be a 
matrix with entries in ${\mathbb K}$, satisfying the following conditions:

\begin{description}
\item[(B1)]  $A$ is symmetric.
\item[(B2)]  If $ i\neq j$ ($i,j \in I$), then $a_{ij}~\leq~0 $.
\item[(B3)]  If $a_{ii} > 0$ ($i \in I$), then $\frac{2a_{ij} }{ a_{ii}}
\in \mathbb Z $ for all $j  \in I$. 
\end{description}

Let $\mathfrak g$ be the Lie algebra with generators
$h_{i}, e_i, f_i$, $i \in I$, and the following defining
relations: for all $i,j,k \in I$,

\begin{description} 
 \item[(R1)] $\left[ h_{i}, h_{j}\right] =0$,
 \item[(R2)] $\left[ h_{i}, e_k\right]   -   a_{jk} e_k =0$,
 \item[(R3)] $\left[ h_{i}, f_k\right]  +   a_{jk} f_k =0$,
 \item[(R4)] $\left[e_i , f_j \right] -   \delta_{ij} h_{i} =0$,
 \item[(R5)] $ (\text{ad} e_i)^{\frac{-2a_{ij}} {a_{ii}} + 1}e_j =0$ and 
        $(\text{ad} f_i)^{\frac{-2a_{ij} }{a_{ii}} + 1}f_j =0 
\mbox{ for all } i \neq j \mbox{ with } a_{ii} > 0$, 
 \item[(R6)] $[e_i, e_j]=0$ and $ [f_i,f_j]=0$ whenever $a_{ij} =0$.
\end{description}

   \begin{definition} 
 The Lie algebra $\mathfrak g$ over $\mathbb K$ is {\em the Borcherds
 algebra} associated to the matrix A. 
\end{definition}

We also call the Lie algebra $\mathfrak g/\mathfrak{c}$, where $\mathfrak{c}$ is a central ideal, a Borcherds algebra. 
The Cartan subalgebra $\mathfrak h$ is the abelian subalgebra spanned by the $\{h_i\}_{i\in I} $. 

Since the defining matrix $A$ may not have full rank we extend $\mathfrak{h}$ by an abelian algebra of degree derivations $\mathfrak D$ to $\mathfrak h^e = \mathfrak D \ltimes \mathfrak h $,  and form $\mathfrak g^e$. We extend the Cartan subalgebra by sufficient degree derivations to ensure that the simple roots defined below are linearly independent. We can then `specialize' the roots if desired, by restricting them to $\mathfrak h$ or some smaller subspace. 

Borcherds algebras have the following properties (for details see \cite{B88},\cite{BoInvent}, and \cite{Jur96}). Define simple roots $\alpha_i(h_j) = a_{ij}$ for $h_j \in (\mathfrak h^ e)^*$. Let $\Delta$ be the set of roots and denote the positive (respectively negative) roots by  $\Delta_+$
(respectively $ \Delta_-$).
    The {\it root lattice} $Q$ is the $\Z$-span of the simple roots $\alpha_i$:
          \[
          Q = \bigoplus_{i\in I } \mathbb{Z}\alpha_i.
          \]
         
The Borcherds algebra $\mathfrak{g}$ has triangular decomposition
          \[
          \mathfrak{g} = \mathfrak{n}^- \oplus \mathfrak{h} \oplus \mathfrak{n}^+
          \]
          where $\mathfrak{h}$ is the Cartan subalgebra, and $\mathfrak{n}^-$ and $\mathfrak{n}^+$ are the direct sums of negative and positive root spaces, respectively:

          \begin{equation}
\mathfrak g = \coprod_{\varphi \in \Delta_+} \mathfrak g_{\varphi} \oplus 
\mathfrak h \oplus \coprod_{\varphi \in \Delta_-}\mathfrak g_{\varphi}.
\label{eq:rts}\end{equation}
The spaces $\mathfrak g_{\varphi}$ are finite-dimensional. We denote the set of {\em real} roots by $\Delta_{\re}$. The set of {\em imaginary} roots is $\Delta \backslash \Delta_{\re}$. The imaginary simple
roots are those $\alpha_i$ for which $a_{ii} \leq 0$.

\subsection{Symmetric bilinear form} 
There exists a symmetric bilinear form $(\cdot,\cdot)$ on Q which extends to $(\mathfrak{h^e})^*$  and $\mathfrak g^e$ satisfying:
          \begin{itemize}
              \item $(\alpha_i, \alpha_j) = a_{ij}$, where $a_{ij}$ are the entries of the  Borcherds Cartan matrix.
              
              \item For all
$\varphi, \psi \in \Delta$ such that $\varphi + \psi \neq 0$,
$$(\mathfrak g_\varphi, \mathfrak g_\psi) = (\mathfrak h^e, \mathfrak g_\varphi) =0.$$
          \end{itemize}
          
          The symmetric bilinear form on the span of the $\alpha_i$, $i \in I$,
is defined on $\mathfrak h$ by $$(h_i,h_j)_{\mathfrak h} =
(\alpha_i, \alpha_j) = a_{ij}.$$ This form can be extended by fixing a choice of form on $\mathfrak d $  satisfying the conditions $(d,h_j)= \alpha_j(d)$ for
$d \in \mathfrak d$ and $j \in I$.  This form can be constructed on $\mathfrak h^e$ to be consistent with the form on $(\mathfrak h^e)^*$. There is a unique invariant
symmetric bilinear form $(\cdot, \cdot)_{\mathfrak g^e}$ on $\mathfrak g^e$
that extends
$(\cdot, \cdot)_{\mathfrak h^e}$. This form satisfies the condition
$$(\mathfrak g_\varphi, \mathfrak g_\psi)_{\mathfrak g^e} = 
(\mathfrak h^e,g_\varphi)_{\mathfrak g^e} =0$$ for all $\varphi,
\psi \in \Delta $ such that $\varphi + \psi \neq 0$.
Given any $\varphi = \sum_{i
\in I} n_i \alpha_i \in \Delta$ for $n_i \in \mathbb Z$, we define
$h_{\varphi} = \sum_{i\in I}n_i h_i$. Then for $a \in \mathfrak g_\varphi$,
$b \in \mathfrak g_{-\varphi}$, the  
form $(\cdot,\cdot)_{\mathfrak g^e}$ satisfies the condition
$$[a,b]=(a,b)_{\mathfrak g^e}h_{\varphi}.$$ 
In particular, $(e_i,f_j)_{\mathfrak g^e}= \delta_{ij}$ for $i,j \in I$ (see
\cite{Jur96} for details,  see also \cite{Kac90}, \cite{MP95}).


   Let $\mathfrak{g}$ be a Borcherds algebra in which no distinct imaginary simple roots are pairwise orthogonal. Equivalently, the defining matrix satisfies $a_{ij}\neq 0$ for all $i$ such $a_{ii}\leq 0$. 
 Let $J \subset I$ be the set $\{ i \in I
\mid \alpha_i \in \Delta_{\re} \} = \{ i \in I \mid a_{ii} >0\}$. 


Note that the
matrix $(a_{ij})_{i,j \in J}$ is a generalized Cartan matrix.  Let
$\mathfrak g_J$ be the Kac--Moody or semi-simple Lie algebra associated to this matrix.
Then $\mathfrak g_J = \mathfrak n^+_J \oplus \mathfrak h_J \oplus \mathfrak n_J^-$, and
$\mathfrak g_J$ is isomorphic to the subalgebra of $\mathfrak g$ generated by $\{e_i, f_i\}$ with $i \in J$. We let $\Delta_J$ denote the set of roots of $\mathfrak g_J$ and $\Delta_J^{\re}$ the subset of real roots.

\subsection{Modules} For a module $X$ over $\mathfrak{g}$, and a linear functional $\lambda \in (\mathfrak{h}^e)^*$, the $\lambda$-weight space $X_\lambda$ is defined as:
    \[ X_\lambda = \{v \in X \mid h.v = \lambda(h)v \text{ for all } h \in \mathfrak{h}^e\}. \]
   A module $X$ is called a {\it weight module} if it can be decomposed into a direct sum of its weight spaces: $X = \bigoplus_{\lambda \in (\mathfrak{h}^e)^*} X_\lambda$, and each weight space $X_\lambda$ has finite dimension.

 A $\mathfrak{g}$-module $X$ is a {\it highest weight module} with {highest weight $\Lambda \in (\mathfrak{h}^e)^*$} if there exists a non-zero vector $v_\Lambda \in X_\Lambda$ (called the {highest weight vector}) such that $\mathfrak{n}^+.v_\Lambda = 0$, where $\mathfrak{n}^+$ is the nilpotent subalgebra corresponding to positive roots, that is, $e_i.v_\Lambda = 0$ for all simple root generators $e_i$.
  The module $X$ is generated by $v_\Lambda$ as a $\mathfrak{g}$-module.

 All weights $\mu$ of a highest weight module $X$ are of the form $\Lambda - \sum_{i\in I} k_i \alpha_i$, where $k_i \in \mathbb{N}$ and $\alpha_i$ are the simple roots of $\mathfrak{g}$.

A highest weight module $X$ with highest weight $\Lambda$ is called a {standard module} if $\Lambda$ satisfies the following conditions related to the structure of $\mathfrak{g}$: $\Lambda$ is a dominant integral weight, meaning $\Lambda(h_i) \in \mathbb{N}$  for all $i \in I$ and $\Lambda(h_i) \geq 0$ for all $i \in J$.
The highest weight vector $v_\Lambda$ satisfies:  $f_i.v_\Lambda = 0$ for all $i \in I$ such that $\Lambda(h_i) = 0$ and $f_i^{\Lambda(h_i)+1}.v_\Lambda = 0$ for all $i \in J$.

 
Given a dominant integral weight $\Lambda$ there is a unique (up to isomorphism) 
standard irreducible highest weight $\mathfrak g^e$-module with highest weight $\Lambda \in P_+$,
denoted 
$L(\Lambda)$ \cite{JurJPAA}. Note that a standard module $X$ can have infinitely many
(independent) elements of the form $f_i^k \cdot x$ where $k \in \mathbb N$, $i \in
I\backslash I_0$ and
$\Lambda (h_i) \neq 0$. 


   
\subsection{Our class of Borcherds algebras}   
   We recall the following result of \cite{JurJPAA}. 
\begin{theorem}\label{JurThm}
 Let $A$ be a matrix satisfying conditions {\bf B1-B3}. Let $J$ and
$\mathfrak g_J$ be as above. Assume 
that if $i,j \in I \backslash J$ and $i \neq j$ then $a_{ij}<0$. 
Then $$\mathfrak g = \mathfrak u^+ \oplus (\mathfrak g_J + \mathfrak h) \oplus \mathfrak u^-,$$ 
where
$\mathfrak u^- = L(\coprod_{i \in I\backslash J}{ U}(\mathfrak n^-_J)\cdot f_i) $
and
$\mathfrak u^+= L(\coprod_{i \in I\backslash J}{ U}(\mathfrak n^+_J)\cdot e_i ) $.
For all $i \in I\backslash J$ the vector spaces 
$V^\prime_i={ U}(\mathfrak n^-_J)\cdot f_i $ is an integrable highest weight $\mathfrak g_J$-module
 and $V_i={U}(\mathfrak n^+_J)\cdot e_i $ 
is an integrable lowest weight $\mathfrak g_J$-module. 
\label{thm:free}
\end{theorem}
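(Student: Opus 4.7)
The plan is to build up the decomposition in stages, first identifying the $\mathfrak{g}_J$-module structure of each $V_i'$, then the free Lie algebra structure of $\mathfrak{u}^-$ (and symmetrically $\mathfrak{u}^+$), and finally piecing together the triangular decomposition.

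First I would verify that for each $i \in I \setminus J$, the space $V'_i = U(\mathfrak{n}^-_J) \cdot f_i$ is an integrable highest weight $\mathfrak{g}_J$-module. The key observation is (R4): for $j \in J$ and $i \in I \setminus J$ we have $i \neq j$, so $[e_j, f_i] = 0$, making $f_i$ a $\mathfrak{n}^+_J$-annihilated vector. Stability of $V'_i$ under $\mathfrak{n}^+_J$ then follows from the identity $[e_j, x \cdot f_i] = [e_j, x] \cdot f_i$ for $x \in U(\mathfrak{n}^-_J)$, together with PBW. The weight of $f_i$ restricted to $\mathfrak{h}_J$ sends $h_j \mapsto -a_{ij} \in \mathbb{N}$ by (B2) and the integrality in (B3) applied with indices in $J$, so the highest weight is dominant integral for $\mathfrak{g}_J$; the Serre relations (R5) and the standard Kac--Moody integrability argument then give integrability of $V'_i$.

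Next, let $V' = \coprod_{i \in I\setminus J} V'_i$, and consider the canonical map $L(V') \twoheadrightarrow \mathfrak{u}^-$ from the free Lie algebra to the Lie subalgebra of $\mathfrak{n}^-$ generated by $V'$. I would argue injectivity by inspecting which defining relations of $\mathfrak{g}$ could impose new identities on iterated brackets of elements of $V'$. Relation (R5) is vacuous here because it is conditioned on $a_{ii} > 0$, which excludes the generators $f_i$ with $i \in I\setminus J$; relation (R6) would produce $[f_i, f_j] = 0$, but by hypothesis $a_{ij} < 0$ for distinct $i, j \in I \setminus J$, so (R6) never applies between such indices. The remaining relations either live inside $\mathfrak{g}_J$ or only govern the $\mathfrak{g}_J$-module action already encoded in $V'_i$, so no nontrivial Lie identity on $V'$ is forced. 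A clean way to make this rigorous is to construct a candidate $\tilde{\mathfrak{g}}$ with $\tilde{\mathfrak{u}}^\pm := L(V^{\prime \pm})$ built in, verify (R1)--(R6) hold in $\tilde{\mathfrak{g}}$ to obtain a surjection $\mathfrak{g} \to \tilde{\mathfrak{g}}$, and then compare root space multiplicities on both sides to see the surjection is an isomorphism; this is the standard maneuver for establishing freeness in the presence of imaginary generators.

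Finally, I would assemble the decomposition. The generators $\{f_i\}_{i \in I}$ of $\mathfrak{n}^-$ split into $\{f_i\}_{i \in J}$, which generate $\mathfrak{n}^-_J$, and $\{f_i\}_{i \in I \setminus J} \subset V' \subset \mathfrak{u}^-$, giving $\mathfrak{n}^- = \mathfrak{u}^- + \mathfrak{n}^-_J$. Directness is a weight argument: every nonzero weight of $\mathfrak{u}^-$ is of the form $-\sum_i n_i \alpha_i$ with some $n_i > 0$ for $i \in I \setminus J$, whereas weights of $\mathfrak{n}^-_J$ are supported entirely on $J$, so $\mathfrak{u}^- \cap \mathfrak{n}^-_J = 0$. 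Symmetrically one obtains $\mathfrak{n}^+ = \mathfrak{u}^+ \oplus \mathfrak{n}^+_J$, and combining with the standard triangular decomposition $\mathfrak{g} = \mathfrak{n}^- \oplus \mathfrak{h} \oplus \mathfrak{n}^+$ yields $\mathfrak{g} = \mathfrak{u}^+ \oplus (\mathfrak{g}_J + \mathfrak{h}) \oplus \mathfrak{u}^-$. The main obstacle in this program is the freeness claim: verifying that $L(V') \to \mathfrak{u}^-$ is injective genuinely uses the hypothesis $a_{ij} < 0$ for distinct $i,j \in I \setminus J$, and, as indicated, requires either a careful presentation-level argument or a character-theoretic comparison.
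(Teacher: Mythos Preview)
The paper does not prove this theorem; it is simply recalled from \cite{JurJPAA} (``We recall the following result of \cite{JurJPAA}''), so there is no in-paper argument to compare against. Your sketch is in fact a reasonable outline of the proof in the original reference: the identification of each $V'_i$ as an integrable highest weight $\mathfrak{g}_J$-module via (R4), (B2), (B3), (R5); the freeness of $\mathfrak{u}^-$ obtained by observing that neither (R5) nor (R6) imposes relations among brackets of the imaginary generators under the hypothesis $a_{ij}<0$; and the weight-support argument for directness of the sum. The one place where your outline is genuinely soft is the freeness step: saying ``the remaining relations only govern the $\mathfrak{g}_J$-module action already encoded in $V'_i$'' is the heart of the matter and is not automatic, since the Serre relations (R5) for $i\in J$ acting on $f_j$ with $j\in I\setminus J$ must be shown to hold already in $L(V')$ as consequences of the module structure of $V'_j$. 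Your suggested remedy---building a model $\tilde{\mathfrak g}$ and comparing characters---is exactly how this is handled in \cite{JurJPAA}.
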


In particular, each $V^\prime_i$ is the unique (up to isomorphism) irreducible $\mathfrak{g}_J$-module of highest weight  $\alpha_i$. Let

\begin{equation}
V= \coprod_{i \in I\backslash J}{ U}(\mathfrak n^+_J)\cdot e_i \label{HW}
\end{equation}
\begin{equation} 
 V^\prime =  \coprod_{i \in I\backslash J}{ U}(\mathfrak n^-_J)\cdot f_i . \label{LW}
 \end{equation}

Let $S$ be a basis for integrable module $V$, and similarly let $S^\prime$ a basis for the integrable module $V^\prime$. By definition $L(V)=L(S)$ and $L(V^\prime) = L(S^\prime)$.  We use both notation, depending on whether we are focusing on the $\mathfrak g_J$-module structure or the generating sets.

Taking the universal enveloping algebra we obtain a decomposition as vector spaces and $\mathfrak h^e$-submodules:

 \begin{equation} U(\mathfrak g) = U(\mathfrak u^- \oplus (\mathfrak g_J + \mathfrak h) \oplus \mathfrak u^+) \cong U(\mathfrak u^-) \otimes U(\mathfrak g_J + \mathfrak h) \otimes U(\mathfrak u^+) \end{equation}

alternatively

 \begin{equation} U(\mathfrak g) \cong  T(V^\prime) \otimes U(\mathfrak g_J + \mathfrak h) \otimes T(V)  \label{decomp}
 \end{equation}


The following corollary follows immediately from Theorem \ref{thm:free}.

\begin{corollary}\label{integrable}
     The tensor algebra $T(V')$ over $\K$ is a $ Q$-graded integrable $\mathfrak{g}_J$-module with finite dimensional homogeneous subspaces. 
         
  \label{TVisintegrable}
\end{corollary}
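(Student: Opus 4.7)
The strategy is to bootstrap from Theorem~\ref{JurThm}, which tells us each summand $V'_i = U(\mathfrak{n}^-_J) \cdot f_i$ is an integrable highest weight $\mathfrak{g}_J$-module of highest weight $\alpha_i$, and then transport this structure to the whole tensor algebra. Since $\mathfrak{u}^- = L(V')$ and $T(V') \cong U(\mathfrak{u}^-)$ as vector spaces, the adjoint action of $\mathfrak{g}_J$ on $\mathfrak{u}^- \subset \mathfrak{g}$ extends uniquely to an action on $T(V')$ by derivations; on pure tensors this is the Leibniz formula
\[
x \cdot (v_1 \otimes \cdots \otimes v_n) = \sum_{k=1}^n v_1 \otimes \cdots \otimes (x \cdot v_k) \otimes \cdots \otimes v_n,
\]
and each tensor power $T^n(V')$ is a $\mathfrak{g}_J$-submodule.

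For the $Q$-grading, I would observe that every weight of $V'_i$ lies in $\alpha_i - \sum_{j \in J} \mathbb{N}\alpha_j \subset Q$, since $V'_i$ is a highest weight module of highest weight $\alpha_i$. Extending additively to tensor products yields a decomposition $T(V') = \bigoplus_{\mu \in Q} T(V')_\mu$, consistent with the $\mathfrak{h}^e$-weight decomposition implicit in \eqref{decomp}. For integrability, for any $i \in J$ the generators $e_i$ and $f_i$ act locally nilpotently on each $V'_j$ by Theorem~\ref{JurThm}, hence on $V' = \coprod V'_j$. Applying the Leibniz rule $N$ times to a pure tensor $v_1 \otimes \cdots \otimes v_n$ produces terms of the form $(e_i^{k_1} v_1) \otimes \cdots \otimes (e_i^{k_n} v_n)$ with $\sum_\ell k_\ell = N$, and choosing $N$ larger than the sum of the individual nilpotency exponents of the $v_\ell$ forces at least one factor to vanish; thus $T^n(V')$, and hence $T(V')$, is integrable.

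The delicate step is the finite dimensionality of each homogeneous subspace. For $\mu = \sum_{l \in I} m_l \alpha_l \in Q$, I would first note that $T(V')_\mu = 0$ unless $m_i \geq 0$ for every $i \in I \setminus J$ and $m_j \leq 0$ for every $j \in J$. In the nonzero case the tensor degree is forced to be $n = \sum_{i \in I \setminus J} m_i$, the multiset of labels $\{i_1, \ldots, i_n\}$ of the tensor slots is determined, and the total $\mathfrak{n}^-_J$-contribution $-\sum_{j \in J} m_j \alpha_j$ admits only finitely many decompositions as $\beta_1 + \cdots + \beta_n$ with each $\beta_\ell \in \sum_{j \in J} \mathbb{N}\alpha_j$. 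Since each weight space $(V'_{i_\ell})_{\alpha_{i_\ell} - \beta_\ell}$ is finite dimensional (a standard property of integrable highest weight modules over the Kac--Moody algebra $\mathfrak{g}_J$), summing over the finitely many orderings and decompositions yields $\dim T(V')_\mu < \infty$. The main obstacle is bookkeeping this count against the two interacting gradings (by $Q$ and by tensor degree); conceptually it is routine, but it requires care to see that the constraints imposed by $\mu$ collapse an a priori infinite sum into a finite one.
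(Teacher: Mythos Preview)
Your proof is correct and follows the same approach as the paper: both rely on the preservation of integrability under direct sums and tensor products, with the $Q$-grading inherited from the root-space grading of $\mathfrak{g}$. The paper dispatches the result in a single sentence by citing that general fact, whereas you unpack it explicitly---giving the Leibniz-rule argument for local nilpotency and a careful combinatorial count for finite dimensionality of the graded pieces---so your write-up is considerably more detailed but not conceptually different.
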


Proof: Direct sums and tensor products of integrable modules are integrable. Hence the $\mathfrak{g}_J$-module $V^\prime$ (respectively $V$) is an integrable $\mathfrak{g}_J$-module, graded by $\Delta_{-}$ (respectively $\Delta_{+}$) with finite dimensional homogeneous subspaces, as is the algebra $T(V^\prime)$, when endowed with the usual action on tensor products, and extending linearly. 
 \qed

We denote by \( \theta \) the Chevalley involution of \( \mathfrak{g} \), defined by:
\[
\theta(e_\alpha) = -f_\alpha, \quad \theta(f_\alpha) = -e_\alpha, \quad \theta(h_\alpha) = -h_\alpha.
\]

This involution establishes an isomorphism as modules between $\mathfrak u^+$ and $\mathfrak u^-$.

\subsection{Magnus group for the free Lie algebra}\label{Magnus}

Let $X$ be a non-empty set. 
    Let $A(X)=A_{\K}(X)$ denote the free associative algebra over $\K$. Let $\widehat{A}({X})=\widehat{A}_\K({X})$ be the formal completion of ${A}({X})$. The completion $\widehat{A}(X)$ is called the {\it Magnus algebra} on $X$ and consists of formal power series in the (non-commuting) variables identified with the elements of ${X}$. 

 

Let $w$ denote a word in ${X}$ and let $c_w\in\K$. For $u=\sum_{w} c_w w\in \widehat{A}({X})$, set $$u_n=\sum_{|w|=n} c_w w.$$ Let $u=\sum_{n=0}^{\infty} u_n$. If $v_n=\sum_{|w|=n} c_w w\in \widehat{A}({X})$ then
$$(uv)_n=\sum_{m=0}^{\infty} u_mv_{n-m}\in \widehat{A}({X}).$$
This defines an $\N$-grading on $\widehat{A}(X)$.  We define the Lie subalgebra $\widehat{L}(X)=\widehat{L}_\K(X)$ in $\widehat{A}(X)$ which consists of the series $u= \sum_{n=1}^{\infty} u_n$ where $u_n \in L_n(X)$, $n = 1,2, \ldots$.

The elements $u \in \widehat{L}(X)$ are series without constant term, so 
$$ \exp (u)  ,\  \ln(1+u)$$
are well defined.

The group $M(X)=M_\K(X)$ is defined as the subgroup of invertible elements of $\widehat{A}({X})$ which are the formal power series with constant term 1. The group $M(X)$ is sometimes called the {\it Magnus group} \cite{Ba93}. In this paper, we follow the convention of other authors (for example \cite{MR1203518} and \cite{MR0422434}) and reserve the term `Magnus group' for the subgroup 
\begin{equation}
G(X) := \exp( \widehat{L}(X))\subset M(X).\label{magnus}\end{equation}   
The group $\exp( \widehat{L}(X))$ is
the subgroup of $M(X)$ generated by exponentials of  Lie words in the completion $\widehat{L}(X)$.


\section{Group construction}

The semi-direct product group $G = G(S') \rtimes G_J$ we construct in this section offers an alternative construction to the  Borcherds--Kac--Moody groups developed via Tits-style generators and relations in \cite{CJM}.
 Our  approach is uniquely suited to algebras admitting the decomposition $\mathfrak{g} = \mathfrak{u}^+ \oplus (\mathfrak{g}_J + \mathfrak{h}) \oplus \mathfrak{u}^-$ of \cite{JurJPAA}, where $\mathfrak{g}_J$ is a Kac-Moody (or semi-simple) subalgebra and $\mathfrak{u}^\pm$ are free Lie algebras generated by imaginary root vectors. This is precisely the class of Borcherds algebras that have no mutually orthogonal imaginary simple roots.

We assume that we have  a Lie algebra satisfying conditions of Theorem~\ref{JurThm}, that is, 
$A$ is a matrix satisfying conditions {\bf B1-B3}. Assume also that if $i,j \in I \backslash J$ and $i \neq j$ then $a_{ij}<0$. In particular, the $\alpha_{i}, \alpha_j $ are not orthogonal if $i\neq j$.

Making use of the decomposition $\mathfrak g = \mathfrak u^+ \oplus (\mathfrak g_J + \mathfrak h) \oplus \mathfrak u^-$,
we will  construct a group $G$ associated to the subalgebra $ \mathfrak g_J  \oplus \mathfrak u^-$, where $\mathfrak u^- = L(V^\prime)$. As vector spaces and $\mathfrak{h}$-modules we have

$$ U( (\mathfrak g_J + \mathfrak h) \oplus \mathfrak u^-) = U( \mathfrak g_J + \mathfrak h) \otimes U( \mathfrak u^-).$$
 We first construct a group $G_J$ associated  to the semi-simple or Kac--Moody algebra $\mathfrak g_J$ using the action on the integrable module $ U(\mathfrak u^-) = T(V^\prime)$. We then choose a basis $S^\prime$ of $V^\prime$ and form the Magnus group  $G(S^\prime)=\exp( \widehat{L}(S^\prime))$
where $\widehat{L}(S^\prime)$ is a completion of the free subalgebra $\mathfrak u^-$.
The  desired group $G$ is then naturally a semi-direct product of $G(S^\prime)$ and $G_J$, imposing relations induced from the action of $(\mathfrak g_J + \mathfrak h)$ on $T(V^\prime)$.


Fix an ordering of $S^\prime$ and fix a Poincar\'e--Birkhoff--Witt basis of $ U (\mathfrak {g}_J^-)$. Hence the basis for each of the irreducible highest weight ${\mathfrak g}_J$ modules is of the form $(\ad s_1\cdots   \ad s_n) (f_i)$ $i\in I\backslash J$. We use multi-indices to indicate the basis element of $U(\mathfrak{g}_J^-)$ along with  each of the highest weight vectors $f_i\in S^\prime$, $i \in I\backslash J$.

\subsection{The Kac--Moody group $G_J$ associated to ${\mathfrak g}_J$ and the module $W$} \label{KMgroup} 


We construct the Kac--Moody group $G_J$ associated with ${\mathfrak g}_J$ over $\mathbb K$, relative to the module $W= T(V^\prime)= U(\mathfrak {u}^-)$ (see \cite{CG} and Section 6.1 of \cite{MP95}). 

The tensor algebra ${\mathfrak g}_J$-module $W= T(V^\prime)$ is an integrable module, (see Lemma \eqref{TVisintegrable}).  In particular the elements of $({\mathfrak g}_J)_\alpha $ act locally nilpotently for all $\alpha \in \Delta_{re}$.

In the notation of Section 6.1 of \cite{MP95}, we set $$ \widehat{G}_J= *_{\alpha \in \Delta_{re}}{(\mathfrak g}_J)_\alpha .$$ 

Then  $\exp: {(\mathfrak g}_J)_\alpha \rightarrow \GL(W)$ is a group homomorphism. 
Let $$\pi: \mathfrak{g}_J \rightarrow \mathfrak{gl}(W )$$ be a representation of $\mathfrak{g}_J$ on $W$.
For all real $\alpha\in \Delta_J$ and all elements $x_\alpha \in  ({\mathfrak g}_J)_\alpha  $, 
$\exp (\pi( x_\alpha) ) $ is a well defined element of $\GL(W)$. 

There is a unique induced homomorphism 
$ \widehat \pi : \widehat G_J \rightarrow \GL(W)$ such that the following diagram commutes:

\begin{center}
\begin{tikzpicture}[every node/.style={midway}]
\matrix[column sep={6em,between origins},
        row sep={4em}] at (0,0)
{ \node(S)   {$\mathfrak g_\alpha$}  ; & \node(F) {$\GL(W)$}; \\
  \node(G) {$\widehat{G}_J$};                   \\};
\draw[->] (S) -- (G) node[anchor=east]  {$\exp_{\widehat{\alpha}}$};
\draw[->] (G) -- (F) node[anchor=north]  {$\widehat{\pi}$};
\draw[->] (S)   -- (F) node[anchor=south] {$\exp^{\pi}_\alpha$};
\end{tikzpicture}
\end{center}
Let $K= \ker \widehat \pi\leq \widehat G_J$, and let $$G_J = \widehat G_J \slash K.$$

 Let $i\in I\backslash J$, consider the module $V^\prime =  \coprod_{i \in I\backslash J}{ U}(\mathfrak n^-_J)\cdot f_i $, then $V'$ is integrable as is $W= T(V^\prime)$. Hence the  action of $\mathfrak g_J$ is locally nilpotent on $W$ for each $\alpha \in \Delta_J^{re}$. Thus we have relations
$$\pi(\mathfrak g_{\alpha})^n(v)=0$$ for all $\a\in \Delta_J^{\re}$ and for all $v\in W$, with sufficiently large $n$ depending on $v$.
In particular for all $x_\alpha\in \mathfrak g_{\alpha}$,
$$\pi(x_\alpha)^n(v)=0,
\text{ for some $n=n(v)$ and for all  $v\in W$}.$$

The group  $G_J$ is generated by the $\exp(\pi(x_\alpha))$ where  $x_\alpha$ is a root vector corresponding to the real root $\alpha \in \Delta_J^{\re}$. For all $x\in\mathfrak g_J$ and for all $g\in G_J$ we have
\begin{equation}\label{Adjoint}
\pi({\rm Ad}(g)(x))=\pi(g)\pi(x)\pi(g)^{-1}
\end{equation}
where ${\rm Ad}:G_J\to \Aut(\mathfrak g_J)$ (see \cite{MP95}, page 489). Equation (7) gives:
    \[ \pi(\rm{Ad}(g)(x)) = \pi(g)\pi(x)\pi(g)^{-1}. \]
Thus
 \begin{align*} \exp(\pi(\rm{Ad}(g)(x))) &= \exp(\pi(g)\pi(x)\pi(g)^{-1}) \\
&= \pi(g) \exp(\pi(x)) \pi(g)^{-1} \\
&=\rm{Ad}(\pi(x))(\exp{\pi(x)}).
\end{align*}


 \subsection{Defining an action of $\mathfrak{g}_J$ on $T(V^\prime)$ as formal polynomials. }\label{Polynomials}

In order to associate a group to $\mathfrak{g}(A)$ over $\K$, our first step is to transfer the action of $\mathfrak{g}_J$ to the setting where elements of $s\in S^\prime$ are treated as distinct non-commuting formal variables. We may then define an action of $G_J$ on $G(S^\prime)$.

First recall by Corollary \ref{TVisintegrable} that $T(V^\prime)$ is an integrable  $\mathfrak{g}_J$-module and denote the representation by 
$$\pi : \mathfrak{g}_J \mapsto \mathfrak{gl}(T(V^\prime)).$$

Fix a basis $S_i$ for each of the highest weight modules ${V_i}^\prime$ appearing in Theorem \ref{JurThm} and  let $S^\prime= \cup_i S_i$ denote the resulting basis of $V^\prime$. For example, we may choose basis elements corresponding to a Poincar\'e--Birkoff--Witt basis of $\mathfrak{n}^-_J$ as needed.

The tensor algebra $T(V')$ can  naturally be identified with  $T(S^\prime)$, viewed as an algebra of formal polynomials in non-commuting variables taken from the set $S^\prime$. Thus $T(S^\prime)$ is naturally isomorphic to the free associative algebra $A(S^\prime)$. We also have $\widehat{A}(S^\prime)\cong \widehat{T}(S^\prime)$.

\begin{proposition}\label{extend}
    There is  an action of $\mathfrak{g}_J$ as derivations on $T(S^\prime)$ when $S^\prime$ is considered as a set of non-commuting variables.
\end{proposition}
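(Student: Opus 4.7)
The plan is to define, for each $x \in \mathfrak{g}_J$, a derivation $D_x$ of $T(S') = A(S')$ by prescribing its values on the generating set $S'$ via the given $\mathfrak{g}_J$-module structure on $V' = \mathrm{span}_\K(S')$, and extending uniquely by the Leibniz rule. I would then check that $x \mapsto D_x$ is a Lie algebra homomorphism $\mathfrak{g}_J \to \Der(T(S'))$.

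The first step rests on Theorem~\ref{JurThm} and Corollary~\ref{TVisintegrable}: $V'$ is a $\mathfrak{g}_J$-module and $S'$ is a $\K$-basis for $V'$. Hence each $x \in \mathfrak{g}_J$ determines a $\K$-linear map $S' \to V' \hookrightarrow T(S')$, $s \mapsto x \cdot s$. By the universal property of the free associative algebra, this extends uniquely to a derivation $D_x$ of $T(S')$; concretely,
\[
D_x(s_{i_1} s_{i_2} \cdots s_{i_n}) \;=\; \sum_{k=1}^{n} s_{i_1} \cdots s_{i_{k-1}}\,(x \cdot s_{i_k})\, s_{i_{k+1}} \cdots s_{i_n},
\]
with $x \cdot s_{i_k}$ expanded back in the basis $S'$. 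For the second step, linearity of $x \mapsto D_x$ is immediate, and the identity $D_{[x,y]} = [D_x, D_y]$ follows because both sides are derivations of $T(S')$ and two derivations agreeing on the generating set $S'$ must be equal: on $s \in S'$ one has $D_{[x,y]}(s) = [x,y] \cdot s = x \cdot (y \cdot s) - y \cdot (x \cdot s) = [D_x, D_y](s)$, which is exactly the $\mathfrak{g}_J$-module axiom for $V'$.

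Finally, for consistency with structure already present on $T(V')$, I would observe that under the canonical identification $T(V') \cong T(S')$ the derivation $D_x$ coincides with the standard $\mathfrak{g}_J$-action on $T(V')$ arising from the coproduct of $U(\mathfrak{g}_J)$: that action is itself a derivation of the tensor algebra whose restriction to $V'$ is $x \cdot$, so the uniqueness in the extension step forces agreement. There is no real obstacle in the proof; the proposition is essentially a packaging of the universal property of the free associative algebra together with the standard fact that a tensor product of $\mathfrak{g}_J$-modules carries a derivation action. The only content requiring verification is the bracket preservation, which reduces to the one-line calculation on generators shown above.
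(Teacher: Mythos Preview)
Your proposal is correct and follows essentially the same approach as the paper: define the derivation on generators via the $\mathfrak{g}_J$-action on $V'$ and extend by the Leibniz rule, which is exactly the paper's ``transfer the action via the algebra isomorphism $\Psi: T(V')\to A(S')$''. Your argument is in fact slightly more complete, since you explicitly verify the bracket identity $D_{[x,y]}=[D_x,D_y]$ on generators, whereas the paper leaves this implicit by invoking the already-established $\mathfrak{g}_J$-module structure on $T(V')$ from Corollary~\ref{TVisintegrable}.
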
  
\begin{proof}
Given $x\in \mathfrak{g}_J$ and any element $v\in V^\prime$ we can express $\pi(x)v= x \circ v\in V'$ in the basis $S^\prime$ with the same coefficients and call this element $x_v$. 

We define the action of $\mathfrak{g}_J$ on $T(S^\prime)$: We fix a vector space isomorphism between $V'$ and the span of the set $S'$. This induces a unique algebra isomorphism $\Psi: T(V') \to A(S')$. The derivation action of $\mathfrak{g}_J$ on $A(S')$ is defined by transferring the action via $\Psi$.


This element is now a polynomial in the formal variables from $S^\prime$.
\end{proof}
If we wish to be explicit, we can fix a Poincar\'e--Birkoff--Witt basis for $T(S^\prime)$, then 
the action of $x$ is defined on a monomial  $w = S^\prime_{i_1} S^\prime_{i_2} \cdots S^\prime_{i_\K}$, for $S^\prime_{i_j} \in S^\prime$ via the usual tensor product action and linearity:

\[
x \circ w := \sum_{j=1}^k S^\prime_{i_1} S^\prime_{i_2} \cdots (x \circ S^\prime_{i_j}) \cdots S^\prime_{i_\K}.
\]
In this sum, each term $(x \circ S^\prime_{i_j})$ is replaced by its polynomial expression $x_{S^\prime_{i_j}}$ in the basis $S^\prime$ considered as formal variables. The expression $x \circ w$ then becomes a polynomial in the formal variables from $S^\prime$.
 We then extend the action to all of  $T(S^\prime)$ linearly.


From Theorem \ref{JurThm}, these monomials  act on highest weight vectors \( f_i \in \mathfrak{g}_{-\alpha_i} \), \( i \in I \setminus J \) and determine bases for the $\mathfrak{g}_J $-modules \( V'_i \).

\subsection{Action of $G_J$ on $\widehat{T}(V')$}  Recall that we have a representation $\pi: \mathfrak{g}_J \to \mathfrak{gl}(T(V'))$. Let $S^\prime$ be a basis for $V'$, viewed as non-commuting formal variables.  In this subsection, we extend Proposition~\ref{extend}  to an action of exponentials of elements from $\pi(\mathfrak{g}_J)$ on the generating set of $G(S^\prime)$.
 

\begin{theorem}\label{GJaction}
$G(S^\prime)$ is a $G_J$-module.
\end{theorem}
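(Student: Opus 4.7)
The plan is to lift the generators $\exp(\pi(x_\alpha))$ of $G_J$, for real roots $\alpha\in\Delta_J^{\re}$, to continuous automorphisms of $\widehat{A}(S')$ that preserve the completed Lie subalgebra $\widehat{L}(S')$, so that by functoriality of $\exp$ they act on $G(S')=\exp(\widehat{L}(S'))$ by group automorphisms. The crucial observation is that the derivation action from Proposition~\ref{extend} is homogeneous of degree zero with respect to the tensor grading on $A(S')\cong T(S')$, since any $x\in\mathfrak{g}_J$ acts factor-by-factor on a pure tensor via the Leibniz rule. Hence each derivation $\pi(x)$ extends uniquely and continuously to $\widehat{A}(S')=\prod_{n\geq 0}T^n(S')$.

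For $x_\alpha\in(\mathfrak{g}_J)_\alpha$ with $\alpha$ real, integrability of $V'$ (Theorem~\ref{JurThm}) and of each $T^n(V')$ (Corollary~\ref{TVisintegrable}) guarantees that $\pi(x_\alpha)$ acts locally nilpotently on every homogeneous component. Degree-preservation then ensures that $\Phi_\alpha:=\exp(\pi(x_\alpha))=\sum_{k\geq 0}\pi(x_\alpha)^k/k!$ is well defined (as a finite sum on each finite-dimensional weight space of $T^n(S')$) and assembles into a continuous algebra automorphism of $\widehat{A}(S')$. Being an exponential of a derivation, $\Phi_\alpha$ preserves commutators; and since $\pi(x_\alpha)$ already preserves the generating subspace $V'\subset L(S')$, a straightforward induction on bracket length shows $\pi(x_\alpha)$ preserves $L(S')$, whereupon continuity yields $\Phi_\alpha(\widehat{L}(S'))=\widehat{L}(S')$. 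For $u\in\widehat{L}(S')$ the identity
\[\Phi_\alpha(\exp u)=\exp(\Phi_\alpha u)\in\exp(\widehat{L}(S'))=G(S')\]
confirms that $\Phi_\alpha$ restricts to a group automorphism of $G(S')$.

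To assemble the $\Phi_\alpha$ into a $G_J$-action, one observes that composition yields a homomorphism $\widehat{G}_J\to\Aut(\widehat{A}(S'))$. Every automorphism in its image is continuous and degree-zero, hence uniquely determined by its restriction to $V'\subset T^1(V')\subset W$. Consequently two elements of $\widehat{G}_J$ with the same image under $\widehat{\pi}$ induce the same automorphism of $\widehat{A}(S')$; in particular the kernel $K=\ker\widehat{\pi}$ acts trivially, and the action descends to $G_J=\widehat{G}_J/K$. Restricting to $G(S')$ yields the claimed $G_J$-module structure.

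The main technical obstacle is the middle step: verifying that the formal exponential $\exp(\pi(x_\alpha))$ not only converges on $\widehat{A}(S')$ but also restricts to an \emph{automorphism of $\widehat{L}(S')$} rather than merely to a continuous linear endomorphism. This hinges on combining tensor-degree preservation with integrability of each $T^n(V')$ to obtain termwise convergence, and using continuity together with the derivation property to guarantee that the closed Lie subalgebra is stabilized.
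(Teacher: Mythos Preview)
Your proposal is correct and follows essentially the same approach as the paper: both arguments exponentiate the degree-zero derivations of Proposition~\ref{extend} to continuous algebra automorphisms of $\widehat{A}(S')$, use integrability of $T(V')$ for local nilpotence along real root vectors, check that $\widehat{L}(S')$ is preserved, and then use $\Phi(\exp L)=\exp(\Phi L)$ to conclude. Your treatment of the descent from $\widehat{G}_J$ to $G_J=\widehat{G}_J/K$ via the observation that a degree-zero continuous algebra automorphism is determined by its restriction to $V'\subset W$ is in fact more explicit than the paper's, which simply asserts that a homomorphism $\varphi\colon G_J\to\Aut(G(S'))$ results.
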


\begin{proof}
 For $x \in \mathfrak{g}_J$, we extend the action of $\pi(x)$ on $V'$
to an action of $\exp(\pi(x))$ on $\widehat{A}(S^\prime)\cong \widehat{T}(S^\prime)$.

If $x = x_\alpha \in \mathfrak{g}_J^\alpha$ for a real root $\alpha \in \Delta_J^{\text{re}}$, the operator $\pi(x_\alpha)$ acts locally nilpotently on $T(V')$.  This  extends to a locally nilpotent action on each $Y \in \widehat{A}(S^\prime)$.

If $x = h \in \mathfrak{h}_J$,  $\pi(h)$ acts diagonally on weight vectors $w \in T(S^\prime)$  and preserves the grading in $T(S^\prime)$. In this case, $\exp(\pi(h)) \cdot w = e^{\lambda(h)}w$.  Thus  $\exp(\pi(x))$ is an algebra automorphism of $\widehat{A}(S^\prime)$. 
 This defines a homomorphism $\varphi: G_J \to \text{End}(T(S^\prime))$.


 Since $\mathfrak{g}_J$ acts linearly on $V'$, the action of $G_J$ preserves the grading of $T(V')$.

This action of $G_J$ on $T(V^\prime)$ can be extended to an action on the completion $\widehat{T}(V^\prime)$  since the automorphisms in $G_J$ are continuous with respect to the topology on $\widehat{T}(V^\prime)$.

We claim that this action restricts to an action of $G_J$ on $G(S^\prime)$. 

We show that this action preserves the subgroup $G(S^\prime) = \exp(\widehat{L}(S^\prime))$.
The action of $\pi(x)$ for any $x \in \mathfrak g_J$ on $\widehat{A}(S^\prime)$ preserves the Lie bracket structure and the $\N$-grading, so it preserves $\widehat{L}(S^\prime)\subset \widehat{A}(S^\prime)$. It follows that the action of any $g \in G_J$  also preserves the subspace $\widehat{L}(S^\prime)$.

 Let $g \in G_J$ and $\exp(L) \in G(S^\prime)$. Since the action of $g$ is by  algebra automorphisms, we have $g \cdot \exp(L) = \exp(g \cdot L)$. Since $g \cdot L \in \widehat{L}(S^\prime)$, $\exp(g \cdot L)\in G(S^\prime)$. The action $g \cdot \exp(L) = g \exp(L) g^{-1}$ defines a homomorphism $\varphi: G_J \to \text{Aut}(G(S^\prime))$.

\end{proof}

 We may write $g\exp(L) g^{-1} $ as ${\rm Ad}(g)(\exp (L))$. By \cite{MP95}, we have 
 $${\rm Ad}(g)(\exp (L))=\exp(\text{Ad}(g)(L)).$$

\subsection{The semi-direct product $G$}\label{SDproduct}

By Theorem~\ref{GJaction}, there is a homomorphism $\varphi: G_J\to \Aut(G(S^\prime))$. Thus we may construct the semi-direct product $$G=   G(S^\prime) \rtimes G_J $$
with group multiplication:
\[
(n_1, g_1)(n_2, g_2) = \left(n_1 \cdot \varphi(g_1)(n_2),\; g_1 g_2\right).
\]
for all  $n_i\in G(S^\prime)$ and $g_i\in G_J$.

The Kac--Moody group $G_J$, associated with the Lie algebra $\mathfrak{g}_J$, is generated by elements of the form $\exp(\pi(x_\alpha))$ where $\pi: \mathfrak{g}_J \to \mathfrak{gl}(T(V'))$ and  for a real root $\alpha \in \Delta_J^{\text{re}}$  we have $x_\alpha\in (\mathfrak{g}_J)_\alpha$.

For $g\in G_J$, $\varphi(g)\in\Aut(G(S^\prime))$.
By Theorem~\ref{GJaction}, the action of $g$ on an element $\exp(L) \in G(S^\prime)$,  for $L \in \widehat{L}(S^\prime)$, is given by $$\varphi(g)(\exp(L))=\exp(\pi(x)) \cdot \exp(L).$$  
Writing $g=\exp(\pi(x))\in G_J$, we have
$$\varphi(g)(\exp(L))=g \cdot \exp(L)=g\exp(L)g^{-1}.$$

The group $G(S^\prime)$ can be identified with the subgroup $\{(m, 1_{G_J}) \mid m \in G(S^\prime)\}$ in $G$. The group $G_J$ can be identified with the subgroup $\{(1_{G(S^\prime)}, g) \mid g \in G_J\}$ in $G$. With these identifications, $G(S^\prime)$ forms a normal subgroup of $G$.
The quotient group $G / G(S^\prime)$ is isomorphic to $G_J$. 

\begin{lemma}\label{mult} Multiplication in $G$ is given by
\[
(n_1, g_1) \cdot (n_2, g_2) = (n_1 \cdot \exp(\ad(g_1)(L)), g_1 g_2)
\]
where $L = \log(n_2) \in \widehat{L}(S^\prime)$.
\end{lemma}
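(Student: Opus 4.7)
The plan is to unpack the definition of semi-direct product multiplication given just above the lemma statement and to rewrite the action of $\varphi(g_1)$ on $n_2$ using the identification $G(S') = \exp(\widehat{L}(S'))$ together with the adjoint formula already recorded in Subsection~\ref{SDproduct}. Concretely, I would begin by noting that since $n_2 \in G(S') = \exp(\widehat{L}(S'))$, there exists a unique $L \in \widehat{L}(S')$ with $n_2 = \exp(L)$, namely $L = \log(n_2)$; this is well defined because $\log$ converges on elements of $\widehat{A}(S')$ whose constant term is $1$, and because $\exp$ and $\log$ are mutual inverses between $\widehat{L}(S')$ and $G(S')$.

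Next, I would apply Theorem~\ref{GJaction}: the action of $g_1 \in G_J$ on $n_2 = \exp(L)$ is given by the algebra automorphism $\varphi(g_1)$ on $\widehat{A}(S')$, restricted to $G(S')$. Because $\varphi(g_1)$ is an algebra automorphism commuting with the completed grading, and since $\exp$ is defined by its power series in $\widehat{A}(S')$, one has $\varphi(g_1)(\exp(L)) = \exp(\varphi(g_1)(L))$. Identifying $\varphi(g_1)$ on $\widehat{L}(S') \subset \widehat{A}(S')$ with $\mathrm{Ad}(g_1)$ — this is the content of the displayed formula in Subsection~\ref{SDproduct} citing \cite{MP95}, namely $\mathrm{Ad}(g)(\exp(L)) = \exp(\mathrm{Ad}(g)(L))$ — we obtain $\varphi(g_1)(n_2) = \exp(\mathrm{Ad}(g_1)(L))$.

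Substituting into the semi-direct product formula
\[
(n_1, g_1)(n_2, g_2) = \bigl(n_1 \cdot \varphi(g_1)(n_2),\, g_1 g_2\bigr)
\]
then yields the claimed identity (with the understanding that $\ad(g_1)$ appearing in the lemma statement denotes $\mathrm{Ad}(g_1)$, matching the notation of the preceding paragraph).

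The argument is essentially a bookkeeping exercise: the only nontrivial input is the compatibility $\varphi(g_1)(\exp(L)) = \exp(\mathrm{Ad}(g_1)(L))$, and the mildest subtlety to check is that this holds not just for $L \in L(S')$ (the free Lie algebra) but for $L$ in the completion $\widehat{L}(S')$. I expect this to be the one point meriting explicit comment: one argues that $\mathrm{Ad}(g_1)$ preserves the $\mathbb{N}$-grading on $\widehat{A}(S')$ and is continuous with respect to the product topology on $\widehat{T}(V') \cong \widehat{A}(S')$, so that it commutes termwise with the exponential series, and that this continuity is already established in the proof of Theorem~\ref{GJaction}. Everything else is direct substitution.
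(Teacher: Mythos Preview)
Your proposal is correct and follows essentially the same route as the paper's own proof: both write $n_2 = \exp(L)$ with $L = \log(n_2)$, invoke the identity $\varphi(g_1)(\exp(L)) = \exp(\mathrm{Ad}(g_1)(L))$, and substitute into the semi-direct product formula. Your additional remarks on continuity in the completion and on the notational slip ($\ad$ versus $\mathrm{Ad}$) go slightly beyond what the paper spells out, but the argument is the same.
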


\begin{proof} The group multiplication is  $$(n_1 \cdot \varphi(g_1)(n_2), g_1 g_2)$$
where the action is given by $\varphi(g_1)(n_2) = \Ad(g_1)(n_2) = g_1 n_2 g_1^{-1}$. Since $n_2 \in G(S^\prime) = \exp(\widehat{L}(S^\prime))$, there is a unique $L \in \widehat{L}(S^\prime)$, namely $L=\log(n_2)$, such that $n_2 = \exp(L)$. 
We have

    \[
    \varphi(g_1)(n_2) = \Ad(g_1)(n_2) = \Ad(g_1)(\exp(L)) = \exp(\Ad(g_1)(L)).
    \]

Thus
    \[
    (n_1, g_1) \cdot (n_2, g_2) = (n_1 \cdot \exp(\Ad(g_1)(L)), g_1 g_2)
    \]
    where $L = \log(n_2)$.

\end{proof}

Note that in Lemma~\ref{mult},  the exponential map is a bijection from $\widehat{L}(S^\prime)$ to $G(S^\prime) = \exp( \widehat{L}(S^\prime))$. This guarantees the existence and uniqueness of $L=\log(n_2)\in \widehat{L}(S^\prime)$.

 In summary we have the following. \begin{theorem}\label{GroupConstr}
 Let $A$ be a matrix satisfying conditions {\bf B1-B3}. Let $J$ and
$\mathfrak g_J$ be as above. Assume 
that if $i,j \in I \backslash J$ and $i \neq j$ then $a_{ij}<0$. 
Then associated to the decomposition $$\mathfrak g = \mathfrak u^+ \oplus (\mathfrak g_J + \mathfrak h) \oplus \mathfrak u^-,$$ 
there is a group $$G=   G(S^\prime) \rtimes G_J $$
where $S^\prime$ is a  basis for $V^\prime$,  
$ V'=\coprod_{i \in I\backslash J}{ U}(\mathfrak n^-_J)\cdot f_i$, $\mathfrak  u^-=L(V')$, 
  $G_J$ is a Kac--Moody group of $\mathfrak g_J$ relative to a representation of $\mathfrak g_J$ on the tensor algebra $W= T(V^\prime)$, 
 and $G(S^\prime)= \exp(\widehat{L}(S^\prime))$. 
If $g = \exp(\pi(x))$ for some $x\in \mathfrak g_J$ then the action of   $G_J$ on $G(S^\prime)$ is given by
$$g\cdot \exp(L)=g \exp(L) g^{-1}={\rm Ad}(g)(\exp (L))=\exp(\rm{Ad}(g)(L))$$
where $L\in \widehat{L}(S^\prime)$. Thus the action of $G_J$ on $G(S^\prime)$ is by Lie algebra automorphisms on $\widehat{L}(S^\prime)$, which then extends to group automorphisms on $G(S^\prime)$. Multiplication in $G$ is given by $$ (n_1, g_1) \cdot (n_2, g_2) = (n_1 \cdot \exp(\text{{\rm Ad}}(g_1)(L)), g_1 g_2) $$
for $L=\log(n_2)\in \widehat{L}(S^\prime)$.

\end{theorem}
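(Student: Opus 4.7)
The statement is a consolidation of the construction carried out in Subsections~\ref{KMgroup}--\ref{SDproduct}, so my plan is to assemble the relevant pieces in order and verify that they fit together as claimed. First, I invoke Theorem~\ref{JurThm} to obtain the decomposition $\mathfrak g = \mathfrak u^+ \oplus (\mathfrak g_J + \mathfrak h) \oplus \mathfrak u^-$ with $\mathfrak u^- = L(V')$, and pick an ordered basis $S'$ of $V'$ as in Subsection~\ref{Polynomials}. The identification $T(V') \cong A(S') \cong T(S')$ together with Corollary~\ref{TVisintegrable} gives us the integrable $\mathfrak g_J$-module $W = T(V')$ on which the real root vectors of $\mathfrak g_J$ act locally nilpotently.

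Next, I construct $G_J$ as in Subsection~\ref{KMgroup}: taking $\widehat G_J = \freeprod_{\alpha \in \Delta_J^{\re}}(\mathfrak g_J)_\alpha$, using the local nilpotence of $\pi(x_\alpha)$ on $W$ to get well-defined exponentials in $\GL(W)$, passing to the induced homomorphism $\widehat\pi : \widehat G_J \to \GL(W)$, and setting $G_J = \widehat G_J / \ker \widehat\pi$. This yields a Kac--Moody (or semi-simple) group generated by the $\exp(\pi(x_\alpha))$ satisfying the $\Ad$-compatibility relation \eqref{Adjoint}. On the other side, I form the Magnus group $G(S') = \exp(\widehat L(S'))$ as in \eqref{magnus}.

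The central input is Theorem~\ref{GJaction}, which produces the homomorphism $\varphi : G_J \to \Aut(G(S'))$ by extending the $\pi$-action continuously from $T(S')$ to $\widehat T(S')$ and checking that the automorphisms preserve $\widehat L(S') \subset \widehat A(S')$, hence $\exp(\widehat L(S')) = G(S')$. With $\varphi$ in hand, the semi-direct product $G = G(S') \rtimes_\varphi G_J$ is a formal categorical construction with multiplication
\[
(n_1, g_1)(n_2, g_2) = (n_1 \cdot \varphi(g_1)(n_2),\ g_1 g_2).
\]
The description $\varphi(g)(\exp L) = g\exp(L)g^{-1} = \Ad(g)(\exp L) = \exp(\Ad(g)(L))$ for $g = \exp(\pi(x))$ follows from the standard identity for $\Ad$ and $\exp$ recorded on p.~489 of \cite{MP95}, which was already used in Subsection~\ref{KMgroup}. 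The multiplication formula in the statement is then exactly Lemma~\ref{mult}, whose proof uses the bijectivity of $\exp : \widehat L(S') \to G(S')$ to guarantee $L = \log(n_2)$ is well-defined.

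There is no real obstacle left at this stage; the potential sticking point, which is handled in Theorem~\ref{GJaction}, is the verification that the action of $G_J$ descends from $\widehat A(S')$ to $G(S') = \exp(\widehat L(S'))$ and defines automorphisms rather than merely continuous linear maps. Once that is in place, I simply collect the construction of $G_J$, the construction of $G(S')$, the homomorphism $\varphi$, the formula for $\varphi(g)$ on exponentials, and the semi-direct product multiplication, and read off the theorem.
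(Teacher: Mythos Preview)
Your proposal is correct and matches the paper's approach exactly: the paper introduces Theorem~\ref{GroupConstr} with the phrase ``In summary we have the following'' and gives no separate proof, treating it as a consolidation of the constructions in Subsections~\ref{KMgroup}--\ref{SDproduct} together with Theorem~\ref{GJaction} and Lemma~\ref{mult}. Your outline identifies precisely these ingredients and the order in which they are assembled.
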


The following theorem shows that the group $G$ is independent of the chosen basis $S^\prime$ for $V^\prime$, up to isomorphism.
\begin{theorem}\label{independent}
Let $S_1'$ and $S_2'$ be distinct bases for $V'$. Let $G_1 = G(S_1') \rtimes G_J$ and $G_2 = G(S_2') \rtimes G_J$. Then $G_1$ and $G_2$ are isomorphic.
\end{theorem}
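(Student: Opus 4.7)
The plan is to build the desired isomorphism as $F = \phi \rtimes \mathrm{id}_{G_J}$, where $\phi : G(S_1') \to G(S_2')$ is induced by a canonical change-of-basis isomorphism of the ambient algebras, and then to verify that $F$ respects the twisted multiplication of Lemma~\ref{mult}.

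First I would set up the algebra isomorphism. Since $S_1'$ and $S_2'$ are both bases of the same vector space $V'$, the universal property of the tensor algebra provides canonical graded algebra isomorphisms $\Psi_i : T(V') \to A(S_i')$ for $i = 1, 2$. Setting $\widehat{\Phi} := \Psi_2 \circ \Psi_1^{-1}$ produces a graded algebra isomorphism $A(S_1') \to A(S_2')$. Because $\widehat{\Phi}$ preserves the $\mathbb{N}$-grading, it extends to a continuous isomorphism $\widehat{A}(S_1') \to \widehat{A}(S_2')$ of Magnus algebras. Being an associative algebra isomorphism, it preserves commutators, hence restricts to a Lie algebra isomorphism $\widehat{L}(S_1') \to \widehat{L}(S_2')$. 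Exponentiating yields a group isomorphism $\phi : G(S_1') \to G(S_2')$ sending $\exp(L) \mapsto \exp(\widehat{\Phi}(L))$.

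The key step is $G_J$-equivariance of $\phi$. The $\mathfrak{g}_J$-action on $T(V')$ is intrinsic to the $\mathfrak{g}_J$-module structure of $V'$ and is defined canonically by derivations on tensor products, independent of any basis. The actions of $\mathfrak{g}_J$ on $A(S_1')$ and $A(S_2')$ arising in Proposition~\ref{extend} are just this same action transported via $\Psi_1$ and $\Psi_2$ respectively, so by construction $\widehat{\Phi}$ intertwines them: for each $x \in \mathfrak{g}_J$ one has $\widehat{\Phi} \circ \pi_1(x) = \pi_2(x) \circ \widehat{\Phi}$. Exponentiating on locally nilpotent or weight-space-diagonalizable operators (the two cases treated in the proof of Theorem~\ref{GJaction}) then gives $\phi \circ \mathrm{Ad}(g) = \mathrm{Ad}(g) \circ \phi$ for all $g \in G_J$.

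Finally, define $F : G_1 \to G_2$ by $F(n, g) = (\phi(n), g)$. Using Lemma~\ref{mult} together with the equivariance of $\phi$, a direct computation shows $F((n_1, g_1)(n_2, g_2)) = (\phi(n_1) \cdot \mathrm{Ad}(g_1)(\phi(n_2)), g_1 g_2) = F(n_1, g_1) F(n_2, g_2)$, and $F$ is a bijection because $\phi$ and $\mathrm{id}_{G_J}$ are. The main obstacle is the equivariance verification in the previous paragraph: one must carefully track that the two choices of basis yield the same underlying $\mathfrak{g}_J$-module on $T(V')$, so that $\widehat{\Phi}$ intertwines rather than twists the representations. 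Once the universal-property identifications $\Psi_i$ are pinned down, this is essentially tautological, but the bookkeeping requires care because Proposition~\ref{extend} explicitly chooses a basis before realising the action as derivations on formal variables.
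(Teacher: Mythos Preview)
Your proposal is correct and follows essentially the same approach as the paper: construct the algebra isomorphism $\widehat{A}(S_1')\to\widehat{A}(S_2')$ from the change of basis, restrict to the completed free Lie algebras, exponentiate to obtain $\phi:G(S_1')\to G(S_2')$, verify $G_J$-equivariance via the intrinsic $\mathfrak{g}_J$-action on $T(V')$, and then check that $(n,g)\mapsto(\phi(n),g)$ is a group isomorphism. Your use of the canonical identifications $\Psi_i:T(V')\to A(S_i')$ and their composite $\Psi_2\circ\Psi_1^{-1}$ is a slightly cleaner way to phrase what the paper does by directly extending the change-of-basis map $\rho:S_1'\to S_2'$, but the resulting maps and the verification steps are the same.
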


\begin{proof}
We first construct an isomorphism $\Phi: G(S_1') \to G(S_2')$ induced by the change of basis from $S_1'$ to $S_2'$.
Let $A(S_1')$ and $A(S_2')$ be the free associative algebras on the sets of formal variables $S_1'$ and $S_2'$, respectively. The (invertible) change of basis map $\rho: S_1'\to S_2'$ extends uniquely to an algebra isomorphism:
\[
\widehat{\phi}: \widehat{A}(S_1') \to \widehat{A}(S_2')
\]
 defined on generators $s_i \in S_1'$, viewing the image of each $s_i$ as a formal polynomial. The isomorphism  $\widehat{\phi}$ induces an  isomorphism on the completions of free Lie algebras
\[
 \widehat{L}(S_1') \to \widehat{L}(S_2'),
\]
which in turn induces a group isomorphism between the corresponding Magnus groups:
\[
\Phi: G(S_1') \to G(S_2') \quad \text{defined by} \quad \Phi(\exp(L)) = \exp(\widehat{\phi}(L))
\]
for any $L \in \widehat{L}(S_1')$. 

We next show that $\Phi$ is compatible with the $G_J$-action. The actions of $G_J$ on  $G(S_1^\prime)$ and $G(S_2^\prime)$ are both induced from the representation $\pi: \mathfrak g_J \to \mathfrak{gl}(T(V^\prime))$. These actions are independent of the choice of basis for $V^\prime$.

The change of basis map $\rho$ commutes with the representation $\pi$: 
$$\rho(\pi(x)(v)) = \pi(x)(\rho(v)) \quad \text{for all } v \in V'.
$$
It follows that  the isomorphism $\widehat{\phi}$ commutes with the action of $\mathfrak g_J$:
\[
\widehat{\phi}(x \cdot L) = x \cdot (\widehat{\phi}(L)) \quad \text{for all } x \in \mathfrak g_J, L \in \widehat{L}(S_1').
\]
This extends to the action of $G_J$. For any $g \in G_J$ and $n \in G(S_1')$:
\[
\Phi(g \cdot n) = \Phi(\Ad(g)(n)) = \Ad(g)(\Phi(n)) = g \cdot (\Phi(n))
\]
Hence $\Phi$ is an isomorphism of $G_J$-modules. We define an isomorphism of semi-direct products $\Psi: G_1 \to G_2$ by:
\[
\Psi((n_1, g)) = (\Phi(n_1), g) \quad \text{for } n_1 \in G(S_1'), g \in G_J.
\]
Since $\Phi$ is an isomorphism, $\Psi$ is  a bijection. We show that $\Psi$ is a group homomorphism.

Let $(n_1, g_1), (n_2, g_2)\in G_1$. Then
\begin{align*}
\Psi((n_1, g_1) \cdot (n_2, g_2)) &= \Psi( (n_1 \cdot (g_1 n_2 g_1^{-1}), g_1 g_2) ) \\
&= (\Phi(n_1 \cdot (g_1 n_2 g_1^{-1})), g_1 g_2) \\
&= (\Phi(n_1) \cdot \Phi(g_1 n_2 g_1^{-1}), g_1 g_2), && \text{since $\Phi$ is a homomorphism} \\
&= (\Phi(n_1) \cdot (g_1 \Phi(n_2) g_1^{-1}), g_1 g_2) && 
\end{align*}
which is multiplication  in $G_2$. Thus $\Psi$ is a group homomorphism and hence an isomorphism. 
\end{proof}

\section{Examples}\label{examples}

 We recall that throughout this section, the
$f_i$ are the Chevalley generators for the imaginary simple roots and 
$U(\mathfrak{n}_J^-) \cdot f_i$ is  the cyclic module generated by $f_i$.

\subsection{A Magnus group for the Monster Lie algebra}\label{MonsterEx}

Let  $\mathfrak g$ be the Borcherds algebra associated to the Borcherds Cartan matrix $A$:

\begin{equation}\label{mdec}%
{A= \begin{blockarray}{cccccccccc}
 & & \xleftrightarrow{c(-1)}  &   \multicolumn{3}{c}{$\xleftrightarrow{\hspace*{0.7cm}c(1)\hspace*{0.7cm}}$}   &  \multicolumn{3}{c}{$\xleftrightarrow{\hspace*{0.7cm} { c(2)}\hspace*{0.7cm}}$}   & \\
\begin{block}{cc(c|ccc|ccc|c)}
  &\multirow{1}{*}{$c(-1)\updownarrow$} & 2 & 0 & \dots & 0 & -1 & \dots & -1 & \dots \\ \cline{3-10}
    &\multirow{3}{*}{ $ \,\,c(1)\,\,\left\updownarrow\vphantom{\displaystyle\sum_{\substack{i=1\\i=0}}}\right.$}& 0 & -2 & \dots & -2 & -3 & \dots & -3 &   \\
      & & \vdots & \vdots & \ddots & \vdots & \vdots & \ddots & \vdots & \dots  \\
        & & 0 & -2 & \dots & -2 & -3 & \dots & -3 &   \\ \cline{3-10}
         & \multirow{3}{*}{ $\,\,c(2)\,\, \left\updownarrow\vphantom{\displaystyle\sum_{\substack{i=1\\i=0}}}\right.$}  & -1 & -3 & \dots & -3 & -4 & \dots & -4 &   \\
    && \vdots & \vdots & \ddots & \vdots & \vdots & \ddots & \vdots & \dots  \\
        && -1 & -3 & \dots & -3 & -4 & \dots & -4 &   \\ \cline{3-10}
         && \vdots &  & \vdots &  &  & \vdots & \vdots &   \\
\end{block}
\end{blockarray}}\;\;,
\end{equation}

The numbers $c(j)$ are the Fourier coefficients of $q^j$  in the modular function $J(q)$:

\begin{equation}\label{Jfun}J(q ) = \sum_{n= -1}^\infty c(n)q^n=\dfrac{1}{q} + 196884q +21493760q^2+864299970q^3+\dots,
\end{equation}
where  $q=e^{2\pi{\bf{i}}\tau}$ for $\tau\in\C$ with $\text{Im}(\tau)>0.$

The following proposition  gives explicit generators and relations for $\mathfrak{m}=\mathfrak  g_J/\mathfrak  z$, where $\mathfrak  z$ is the center of $\mathfrak  g_J$
\begin{proposition}\label{P-Monster} \cite{JurJPAA}
The Serre--Chevalley generators of $\mathfrak{m}$ over $\K$ are
$e_{-1}$, $f_{-1}$, $h_{1}$, $h_2$, and $e_{jk}$, $f_{jk}$ for all
$(j,k)\in  I-\left\{(-1,1) \right\}$,
with defining relations:
\begin{align*}
\tag{M:1}\label{Mhh} \left[h_{1},h_2\right]&=0,\\
\tag{M:2a}\label{Mhe-} \left[h_1,e_{-1} \right] &= e_{-1},&              
              \left[h_2, e_{-1} \right] &= -e_{-1},\\
\tag{M:2b}\label{Mhe} \left[h_1,e_{jk} \right]&= e_{jk},&
              \left[h_2, e_{jk} \right] &= j e_{jk},\\
\tag{M:3a} \label{Mhf-} \left[h_1,f_{-1} \right] &= -f_{-1},&
             \left[h_2,f_{-1} \right] &= f_{-1},\\
\tag{M:3b}\label{Mhf} \left[h_1,f_{jk} \right]&= - f_{jk},&
              \left[h_2,f_{jk} \right] &= -j f_{jk},\\
\tag{M:4a}\label{Me-f-} \left[e_{-1},f_{-1} \right]&=h_1-h_2, \\
\tag{M:4b}\label{Mef-} \left[e_{-1},f_{jk} \right]&=0,  & \left[e_{jk},f_{-1} \right]&=0,\\
\tag{M:4c}\label{Mef} \left[e_{jk},f_{pq} \right]&=-\delta_{jp}\delta_{kq} \left(jh_1 + h_2\right),\\ 
\tag{M:5}\label{Mee}
  \left(\ad e_{-1} \right)^j e_{jk}&=0,&\qquad \left(\ad f_{-1} \right)^j f_{jk}&=0,
\end{align*}
for all $(j,k),\,(p,q) \in  I-\left\{(-1,1) \right\}$. Also ${\mathfrak  h} = \C h_1\oplus \C h_2 = \mathfrak{h}_A/\mathfrak{z}$, the Cartan subalgebra of~$\mathfrak m$.
\end{proposition}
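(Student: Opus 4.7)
The plan is to identify $\mathfrak{m}$ with the quotient of the Borcherds algebra $\mathfrak g$ associated with $A$ by its center $\mathfrak z$, and then rewrite the Borcherds--Chevalley relations (R1)--(R6) in terms of a two-dimensional basis $\{h_1,h_2\}$ of the reduced Cartan $\mathfrak h_A/\mathfrak z$. The key structural observation is that every row of $A$ indexed by $(j,k)$ depends only on $j$: if $u=(1,1,1,\ldots)$ and $v$ denotes the formal vector with entry $1$ in position $(-1,1)$ and entry $-n$ in each position $(n,k)$ with $n\ge 1$, then the row of $A$ at index $(j,k)$ equals $v-ju$ (with $j=-1$ for the real row and $j\ge 1$ for the imaginary rows). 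Hence $\operatorname{rank} A=2$ and $\dim(\mathfrak h_A/\mathfrak z)=2$.

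First I would pin down the center. Since $\alpha_i(h_j)=a_{ji}$ by (R2), $\mathfrak z$ coincides with the left kernel of $A$, which by symmetry equals $\ker A$. I would then choose $h_1,h_2\in\mathfrak h_A/\mathfrak z$ as a basis tailored to the identities
\[
h_{(-1,1)}\equiv h_1-h_2,\qquad h_{(j,k)}\equiv -(jh_1+h_2)\pmod{\mathfrak z}.
\]
Consistency of this assignment is verified by declaring the simple roots to be $\alpha_{(-1,1)}=(1,-1)$ and $\alpha_{(j,k)}=(1,j)$ in the coordinates dual to $(h_1,h_2)$ and checking that the bilinear form $\langle(a,b),(c,d)\rangle=-(ad+bc)$ on this two-dimensional space reproduces every entry of $A$.

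Next I would translate (R1)--(R6) into (M:1)--(M:5) by direct substitution. Relation (R1) applied to $h_1,h_2$ gives (M:1); (R2) combined with the expressions above gives (M:2a)--(M:2b), and (R3) gives (M:3a)--(M:3b). For (R4), $[e_{-1},f_{-1}]=h_{(-1,1)}\equiv h_1-h_2$ produces (M:4a), $[e_{jk},f_{jk}]=h_{(j,k)}\equiv -(jh_1+h_2)$ produces (M:4c), and the vanishing of $e$--$f$ brackets across real and imaginary indices gives (M:4b). For (R5) the Serre exponent computes to
\[
\tfrac{-2a_{(-1,1),(j,k)}}{a_{(-1,1),(-1,1)}}+1=\tfrac{-2(1-j)}{2}+1=j,
\]
yielding (M:5). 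Finally (R6), which imposes $[e_i,e_j]=0$ whenever $a_{ij}=0$, contributes only the family $[e_{-1},e_{(1,k)}]=0$ (since $a_{(j,k),(p,q)}=-(j+p)\ne 0$ for $j,p\ge 1$), and this is already subsumed by (M:5) with $j=1$.

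To show (M:1)--(M:5) are \emph{defining} relations, let $\widetilde{\mathfrak m}$ denote the abstract Lie algebra with those generators and relations. The computation above supplies a surjection $\widetilde{\mathfrak m}\to\mathfrak m$. To invert it, I would set $\widetilde h_{(-1,1)}:=h_1-h_2$ and $\widetilde h_{(j,k)}:=-(jh_1+h_2)$ inside $\widetilde{\mathfrak m}$ and send each Chevalley generator of $\mathfrak g$ to its obvious counterpart; (R1)--(R6) then follow mechanically from (M:1)--(M:5). The main obstacle is verifying that this map annihilates $\mathfrak z=\ker A$. A direct computation gives
\[
\sum_i c_i\widetilde h_i=\bigl(c_{(-1,1)}-\textstyle\sum_{j,k}jc_{(j,k)}\bigr)h_1-\bigl(c_{(-1,1)}+\sum_{j,k}c_{(j,k)}\bigr)h_2,
\]
and both coefficients vanish precisely on $\ker A$: taking differences of the equations $\sum_i c_i a_{i,(n,l)}=0$ for different $n\ge 1$ forces $c_{(-1,1)}+\sum c_{(j,k)}=0$, and substituting back recovers $c_{(-1,1)}=\sum jc_{(j,k)}$. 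This yields the desired homomorphism $\mathfrak m\to\widetilde{\mathfrak m}$ inverting the surjection.
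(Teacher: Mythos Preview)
The paper does not supply its own proof of this proposition: it is quoted from \cite{JurJPAA} and used as input for the Monster example. Your argument is correct and is exactly the kind of proof one would reconstruct from the general Borcherds relations (R1)--(R6) together with the observation that $a_{(j,k),(p,q)}=-(j+p)$ forces $\operatorname{rank} A=2$. The translation of (R4) and (R5) into (M:4a)--(M:5) is accurate, and your check that (R6) contributes nothing beyond the $j=1$ case of (M:5) is the one place a careless reader might slip. The verification that the map $\mathfrak g\to\widetilde{\mathfrak m}$ kills $\mathfrak z=\ker A$ is also fine; a slightly cleaner way to phrase it is that varying $p$ in $\sum_i c_i a_{i,(p,q)}=-\sum_i c_i(j_i+p)=0$ immediately gives $\sum_i c_i=0$ and $\sum_i c_i j_i=0$, which are precisely your two vanishing coefficients.
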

From \cite{JurJPAA}, we have
$$\mathfrak g = \mathfrak u^+ \oplus (\mathfrak g_J + \mathfrak h) \oplus \mathfrak u^-$$
and 
 $$\mathfrak m \cong\mathfrak g / \mathfrak z$$
where  $\mathfrak{z}$ is the center of $\mathfrak{g}(A)$.  We have $\mathfrak{g}_J= \mathfrak{sl_2}$ with standard basis \( \{e_{-1}, f_{-1}, h_{-1}\} \) satisfying:
\[
[h_{-1}, e_{-1}] = 2e_{-1}, \quad [h_{-1}, f_{-1}] = -2f_{-1}, \quad [e_{-1}, f_{-1}] = h_{-1}.
\]
Then \( \mathfrak{n}_J^- = \langle f_{-1} \rangle \) and \( U(\mathfrak{n}_J^-) = \mathbb{C}[f_{-1}] \). Applying Theorem \ref{thm:free} and indexing to reflect the block form of \eqref{mdec}
\begin{align*}
V^\prime&= \coprod_{j\in\N} U(f_{-1})\cdot f_{j k}\\
V &= \coprod_{j\in\N}{ U}(e_{-1})\cdot e_{j k}.
\end{align*}

Let ${\mathfrak g_J} = \mathfrak {sl}_2$. Here $V^\prime$ is spanned by $S^\prime =\cup_{j \in \mathbb N}
\{ (f_{-1})^\ell \cdot f_{jk}\mid\ 0 \leq \ell<j, 1 \leq k \leq
c(j) \}$ a set of  $\mathfrak {sl}_2$-highest weight modules with  $\mathfrak{u}^- = L(V^\prime)$ and $\mathfrak{u}^+ = L(V)$.

In this case, we can be more explicit about the structure of the tensor algebra $T(V)$. 
The irreducible $\mathfrak  m$-module $T(V)$, constructed in \cite{jlw},  has the structure of a generalized Verma module and is induced from an irreducible module for
the parabolic subalgebra $\mathfrak u^+\oplus \mathfrak {gl}_2$ of $\mathfrak  m$.



Let $f_{jk}\in \mathfrak g_{-\alpha_{jk}}$ be the Chevalley generator corresponding to the imaginary simple root $\alpha_{jk}$. Then $f_{jk} \in V'_{jk}$ is a highest weight vector. 
Since $V'_{jk}$ is 3-dimensional, $f_{jk}$ satisfies:

 \begin{align*}
     e_{-1} \cdot f_{jk} &= 0\\
   h_{-1} \cdot f_{jk} &= \lambda_{jk} f_{jk}\text{ (for some weight $\lambda_{jk}$)}\\
     (f_{-1})^3 \cdot f_{jk} &= 0.
   \end{align*}

    Each $V'_{jk}$ is a 3-dimensional irreducible highest weight $\mathfrak{g}_J = \mathfrak{sl}_2$-module. The module $V'_{jk}$ is generated by the action of $U(\mathfrak{n}_J^-)$ on the highest weight vector $f_{jk}$:
    \[
    V'_{jk} = U(\mathfrak{n}_J^-) \cdot f_{jk} = {\rm Span}_{\mathbb{C}}\{ (f_{-1})^\ell \cdot f_{jk} \mid (\ell,j,k)\in E\}.
    \]

Thus, the Poincar\'e--Birkoff--Witt basis for \( U(\mathfrak{n}_J^-) \cdot f_i \) is:
\[
\{ f_{-1}^\ell \cdot f_{jk} \}
\]

and 
\[
S^\prime = \bigcup_{j \in \mathbb{N}} \left\{ f_{-1}^\ell \cdot f_{jk} \mid 0 \leq \ell < j, 1 \leq k \leq c(j) \right\}
\]
is a Poincar\'e--Birkoff--Witt-basis for \( V' = \bigoplus V'_{jk} \).    The action of $U(\mathfrak{n}_J^-) = \mathbb{C}[f_{-1}]$ on $f_{jk}$ gives the basis:
    \begin{align*}
        v_1 &= f_{jk} \\
        v_2 &= f_{-1} \cdot f_{jk} \\
        v_3 &= (f_{-1})^2 \cdot f_{jk}
    \end{align*}
where $f_{-1}^3\cdot f_{jk}=0$, $e_{-1}\cdot f_{jk}=0$, $h_{-1}\cdot f_{jk} = \lambda f_{jk}$ for some $\lambda\in\C$. The formal power series non commuting variables are 
 \begin{align*}b_1&= f_{jk},
 \\  b_2&= f_{-1}\cdot f_{jk},\\  
 b_3&= (f_{-1})^2\cdot f_{jk}.
\end{align*}

We recall also that $U(\mathfrak{sl}_2)$ has a Poincar\'e--Birkoff--Witt basis of the form $ f^ih^ke^j$.  
We define an $\mathfrak{sl}_2$-action on the formal variables $b_1, b_2, b_3$ by transferring the above action:  for any element $y \in U(\mathfrak{sl}_2)$, its action $y \circ b_j$ is found by computing $y \cdot v_j$, expressing the result in the $\{v_j\}$ basis, and then substituting $b_j$ for $v_j$.

We have the correspondence with the basis vectors of $V'_{jk}$:
    \begin{align*}
        b_1 &\leftrightarrow v_1 = f_{jk} \\
        b_2 &\leftrightarrow v_2 = f_{-1} \cdot f_{jk} \\
        b_3 &\leftrightarrow v_3 = (f_{-1})^2 \cdot f_{jk}
    \end{align*}

The mapping $\mu: \text{Span}_{\mathbb{C}}(S^\prime) \to V'$ defined by $\mu(b_s) = v_s$, and extended linearly, is an isomorphism by construction.
 Each formal variable $b_s \in S^\prime$ is in one-to-one correspondence with a specific $v_s\in V'$. This is  a faithful representation of the module $V'$.



As an example, let  $x = (f_{-1})^2 + (f_{-1}e_{-1}) - (f_{-1}h_{-1})$.
We want to define $x \circ b_1$.

 The formal variable is $b_1$, corresponding to $v_1 = f_{jk}$.
 We compute $x \cdot v_1 = ((f_{-1})^2 + (f_{-1}e_{-1}) - (f_{-1}h_{-1})) \cdot f_{jk}$:
    \begin{itemize}
        \item $(f_{-1})^2 \cdot f_{jk} = v_3$
        \item $(f_{-1}e_{-1}) \cdot f_{jk} = f_{-1}(e_{-1} \cdot f_{jk}) = f_{-1}(0) = 0$
        \item $(f_{-1}h_{-1}) \cdot f_{jk} = f_{-1}(h_{-1} \cdot f_{jk}) = f_{-1}(\lambda_{jk} f_{jk}) = \lambda_{jk} (f_{-1} f_{jk}) = \lambda_{jk} v_2$
    \end{itemize}
    So, $x \cdot v_1 = v_3 + 0 - \lambda_{jk}v_2$.
 Transferring to formal variables:
    \[ x \circ b_1 = b_3 - \lambda_{jk}b_2. \]

The group $G_J=\SL_2$ is given by generators $e_{-1}$, $f_{-1}$, $h_{-1}(=h_1-h_2)$. Let $\widetilde{w} = \exp(e_{-1})\exp(-f_{-1})\exp(e_{-1})$.  
Let $u, v\in\K$ and $s,t\in\K^\times$. Then $\SL_2(\K)$ has  relations:

        \begin{align*}
            \exp(u e_{-1})\exp(v e_{-1}) &= \exp((u+v)e_{-1}) \\
            \exp(u f_{-1})\exp(v f_{-1}) &= \exp((u+v)f_{-1})\\
       \exp((\log s)h_{-1})\exp((\log t)h_{-1}) &= \exp((\log (st))h_{-1}) \\
       \exp(-t f_{-1})\exp(s e_{-1})\exp(t f_{-1})&=\exp(-t^{-1} e_{-1})\exp(-t^2s f_{-1})\exp(-t e_{-1})\\
            \exp((\log s)h_{-1}) \exp(u e_{-1}) (\exp((\log s)h_{-1}))^{-1} &= \exp(s u e_{-1}) \\
            \exp((\log s)h_{-1}) \exp(u f_{-1}) (\exp((\log s)h_{-1}))^{-1} &= \exp(s^{-1} u f_{-1})\\
            \widetilde{w} \exp(u e_{-1}) \widetilde{w}^{-1} &= \exp(-u f_{-1}) \\
            \widetilde{w} \exp(u f_{-1}) \widetilde{w}^{-1} &= \exp(-u e_{-1}) \\
            \widetilde{w} \exp((\log s)h_{-1}) \widetilde{w}^{-1} &= \exp((\log (s^{-1}))h_{-1})
        \end{align*}

Since $G(S^\prime) = \exp( \widehat {L}(S^\prime))$, every element of $G(S^\prime)$ is of the form $\exp(L)$ for $L\in \widehat {L}(S^\prime)$.
As in Theorem~\ref{GroupConstr}, the action of $g \in \SL_2(\K)$ on $\exp(L) \in G(S^\prime)$ is $g\cdot \exp(L)=\exp(\text{Ad}(g)(L))$, 
where $g = \exp(\pi(x))$ for some $x\in\mathfrak{sl}_2(\K)$.

Let $f_{\ell,jk}$ be a basis element of $S^\prime$.  The action of $\SL_2(\K)$ on $\exp(v f_{\ell,jk})$, for $v \in\K$, which defines the semi-direct product $G=G(S^\prime)\rtimes   \SL_2(\K)$ is as follows:

 The operators $\text{ad}_{e_{-1}}$ and $\text{ad}_{f_{-1}}$ act locally nilpotently on $\widehat{L}_\K(S^\prime)$. For any $Y \in S^\prime$, $\exp(\text{ad}_{ue_{-1}})(Y)$ is a finite $\K$-linear sum of Lie words in $S^\prime$. Thus, $\exp(\text{ad}_{ue_{-1}})$ and $\exp(\text{ad}_{uf_{-1}})$ are automorphisms of $\widehat{L}_\K(S^\prime)$.

If $f_{\ell,jk}$ is an eigenvector for $\text{ad}_{h_{-1}}$ with eigenvalue $\lambda_{\ell,jk}$, then for $v \in\K, S^\prime \in\K^\times$:
    \[ \exp((\log S^\prime)h_{-1}) \exp(v f_{\ell,jk}) (\exp((\log S^\prime)h_{-1}))^{-1} = \exp( v (S^\prime)^{\lambda_{\ell,jk}} f_{\ell,jk} ). \]
 
The action of $\exp(ue_{-1})$,  $u \in\K$ is described as follows:
    \[ \exp(ue_{-1})\exp(v f_{\ell ,jk})(\exp(ue_{-1}))^{-1}= \left(\prod_{\alpha\in R'(\ell,j,k)} \exp(c_{\alpha}(u,v) f_{\alpha}) \right) \exp(v f_{\ell ,jk}) \]
    where $f_{\alpha}$ is a fixed  basis element for the root space $\mathfrak{g}_\alpha$,  $\a=(a-b\ell)\alpha_{-1}-b\alpha_{jk}$ and $c_{\alpha}(u,v)$ is a constant depending on $\a$, $u$ and $v$.

 If $[e_{-1}, f_{0,jk}] = 0$, then $c_{\alpha}(u,v)=0$ for $\alpha \in R'(\ell,j,k)$ and
    \[ \exp(ue_{-1})\exp(v f_{0,jk})(\exp(ue_{-1}))^{-1}=\exp(v f_{0,jk}) \]

The action of $\exp(uf_{-1})$, $u \in\K$ is given as follows:
    \[ \exp(uf_{-1})\exp(v f_{\ell ,jk})(\exp(uf_{-1}))^{-1}=\left( \prod_{\alpha\in S^\prime(\ell,j,k)} \exp(d_{\alpha}(u,v) f_{\alpha})\right) \exp(v f_{\ell ,jk}) \]
    where $f_{\alpha}$ is a fixed basis element for $\mathfrak{g}_\alpha$, $\a=-(a+b\ell)\alpha_{-1}-b\alpha_{jk}$ and $d_{\alpha}(u,v)$ is a constant depending on $\a$, $u$ and $v$.

 If $[f_{-1}, f_{j-1,jk}] = 0$, then $d_{\alpha}(u,v)=0$ for $\alpha \in S^\prime(\ell,j,k)$ and 
    \[ \exp(uf_{-1})\exp(v f_{j-1,jk})(\exp(uf_{-1}))^{-1}=\exp(v f_{j-1,jk}). \]

The root sets $R'(\ell,j,k)$ and $S^\prime(\ell,j,k)$ are:
\begin{align*}
R'(\ell,j,k)&=\left\{ (a-b\ell)\alpha_{-1}-b\alpha_{jk} \mid a,b\in\N, a-b\ell<j,\ 1 \leq k \leq c(j) \right\} \\
S^\prime(\ell,j,k)&=\left\{ -(a+b\ell)\alpha_{-1}-b\alpha_{jk} \mid a,b\in\N, a+b\ell<j,\ 1 \leq k \leq c(j) \right\}
\end{align*}
The notation $\alpha_{jk}=\alpha_{0,jk}$ refers to a simple imaginary root, and $\alpha_{-1}$ is the real simple root.

The $f_{\alpha}$ and $f_{\ell ,jk}$ are viewed as elements of  $\widehat {L}(S^\prime)$ corresponding to root vectors for $\alpha$ and $\alpha_{\ell,jk}$ respectively.  The factorizations given  are non-trivial consequences of the Baker--Campbell-Hausdorff formula (see \cite{CJM}). They appear in  \cite{CJM} in a slightly different form.

\subsection{Monstrous Lie algebras of  Fricke type}
An element $g\in\mathbb{M}$ is called {\it Fricke} if  the McKay--Thompson series $T_{g}(\tau)$ is invariant
under the level $N$ Fricke involution $\tau \mapsto -1/N\tau$ for some $N\geq 1$.

Otherwise $g$ is called non-Fricke. There are 141 Fricke classes, and 53 non-Fricke classes in $\M$.

When $g$ is Fricke, $\mathfrak m_g$ has similar structure to the
Monster Lie algebra $\mathfrak m$.

The Lie algebra $\mathfrak m_g$, for $g$ of Fricke type, has denominator formula \cite{CarDuke}
$$f(p) - f(q^{1/N}) = p^{-1} \prod_{m >0, \; n \in \frac{1}{N}\Z} (1-p^m q^n)^{c_g(m,n)}$$ where $$f(q) = q^{-1} + \sum_{n=1}^\infty c_g(1,\frac{n}{N}) q^n.$$
The exponents $c_g(m,\frac{n}{N})$ are the multiplicities of roots $(m,\frac{n}{N})\in\Z\oplus \frac{1}{N}\Z$. 
They are  the coefficients of a discrete Fourier transform of the generalized McKay--Thompson series relative to the cyclic group $\langle g\rangle$.

For $g$ of Fricke type, we have $\mathfrak m_g=\mathfrak{g}(A_g)/\mathfrak z$ where $\mathfrak z$ is the center of $ \mathfrak{g}(A_g)$ and $A_g$ is the Borcherds Cartan matrix \cite{CarFricke}:

\begin{equation*}
{A_g=\smaller \begin{blockarray}{cccccccccc}
 & & \xleftrightarrow{c_g(1,\frac{-1}{N})}  &   \multicolumn{3}{c}{$\xleftrightarrow{\hspace*{0.7cm}c_g(1,\frac{1}{N})\hspace*{0.7cm}}$}   &  \multicolumn{3}{c}{$\xleftrightarrow{\hspace*{0.7cm} { c_g(1,\frac{2}{N})}\hspace*{0.7cm}}$}   & \\
\begin{block}{cc(c|ccc|ccc|c)}
  &\multirow{1}{*}{$c_g(1,\frac{-1}{N})\updownarrow$} & 2 & 0 & \dots & 0 & -1 & \dots & -1 & \dots \\ \cline{3-10}
    &\multirow{3}{*}{ $ \,\,c_g(1,\frac{1}{N})\,\,\left\updownarrow\vphantom{\displaystyle\sum_{\substack{i=1\\i=0}}}\right.$}& 0 & -2 & \dots & -2 & -3 & \dots & -3 &   \\
      & & \vdots & \vdots & \ddots & \vdots & \vdots & \ddots & \vdots & \dots  \\
        & & 0 & -2 & \dots & -2 & -3 & \dots & -3 &   \\ \cline{3-10}
         & \multirow{3}{*}{ $\,\,c_g(1,\frac{2}{N})\,\, \left\updownarrow\vphantom{\displaystyle\sum_{\substack{i=1\\i=0}}}\right.$}  & -1 & -3 & \dots & -3 & -4 & \dots & -4 &   \\
    && \vdots & \vdots & \ddots & \vdots & \vdots & \ddots & \vdots & \dots  \\
        && -1 & -3 & \dots & -3 & -4 & \dots & -4 &   \\ \cline{3-10}
         && \vdots &  & \vdots &  &  & \vdots & \vdots &   \\
\end{block}
\end{blockarray}}\;\;,
\end{equation*}

Let $$I = \left\{(j,k)\mid j\in\{-1,1,2,3,\dots\},\; 1 \leq k\leq c_g(1,j/N) \right\}. $$

For $(j,k)\in {I}$, the entries satisfy
$$ (\a_{jk},\a_{pq})=a_{jk,pq} = -(j+p). $$

In the case where $g$ is Fricke, Carnahan \cite{CarFricke} proved an analog for $\mathfrak{m}_g$ of Jurisich's decomposition $\mathfrak m\cong\mathfrak u^-\oplus\mathfrak{gl}_2\oplus\mathfrak u^+$ where $\mathfrak u^\pm$ are free Lie algebras. 
 Carnahan also showed that the Borcherds Cartan matrix for $\mathfrak{m}_g$ has the same form as that of $\mathfrak{m}$, with different block sizes as given by the dimensions of the root spaces for imaginary simple roots of $\mathfrak{m}_g$.

The $\mathfrak{gl}_2$ summand contains the subalgebra $\mathfrak{sl}_2$, which is our $\mathfrak{g}_J$. This $\mathfrak{sl}_2$ corresponds to the unique real simple root of $\mathfrak{m}_g$, which we label $\alpha_{-1}$. The associated group is $G_J = \mathrm{SL}_2(\mathbb{K})$.

The subalgebra $\mathfrak u^-$ is a free Lie algebra $L(V')$. The  space $V'$ is a direct sum of irreducible highest weight $\mathfrak{g}_J$-modules 
\[
V' = \bigoplus_{j \ge 1, \, 1 \le k \le c_g(1, \frac{j}{N})} V_{jk}.
\]

Each module $V_{jk}$ is generated by a highest weight vector $f_{jk}$ corresponding to an imaginary simple root $\alpha_{jk}$ of $\mathfrak{m}_g$. The  inner product is $(\alpha_{-1}, \alpha_{jk}) = -j$. The highest weight of the $\mathfrak{sl}_2$-module $V_{jk}$ with respect to $h_{-1}$ is therefore $\lambda = -(\alpha_{-1}, \alpha_{jk}) = j$. For $j \in \mathbb{N}$, this corresponds to an irreducible $\mathfrak{sl}_2$-module of dimension $j+1$.

 Let $S^\prime$ be the following basis of $V^\prime$: we form  the union of the Poincar\'e-Birkhoff-Witt basis elements for each of the $(j+1)$-dimensional modules $V_{jk}$:
\[
S^\prime = \bigcup \left\{ (f_{-1})^\ell \cdot f_{ik} \, \middle| \, j \ge 1, 1 \le k \le c_g(1, \frac{j}{N}), 0 \le \ell \le j \right\}
\]
 As in Subsection~\ref{MonsterEx}, we   construct $G = G(S^\prime) \rtimes G_J$
where $G(S^\prime) = \exp(\widehat{L}(S^\prime))$ and $\widehat{L}(S^\prime)$ is the completion of the free Lie algebra $\mathfrak u^- = L(S^\prime)$.

The $\mathrm{SL}_2(\mathbb{K})$ Group $G_J$
is generated by elements $\exp(ue_{-1})$, $\exp(uf_{-1})$, and $\exp((\log s)h_{-1})$, with the standard $\mathrm{SL}_2(\mathbb{K})$ relations as given in Subsection~\ref{MonsterEx}.

The group $G$ is defined as the semi-direct product $G = G(S^\prime) \rtimes \mathrm{SL}_2(\mathbb{K})$. The action of an element $h \in \mathrm{SL}_2(\mathbb{K})$ on an element $\exp(L) \in G(S^\prime)$ is given by
$h \cdot \exp(L)  = \exp(\mathrm{Ad}(h)(L))$.

The element $h_{-1} \in \mathfrak{sl}_2$ acts diagonally. The weight of $f_{\ell,jk}$ is $\lambda_{\ell,jk} = j - 2\ell$. The action for $s \in \mathbb{K}^\times$ and $v \in \mathbb{K}$ is:
    \[
    \exp((\log s)h_{-1}) \cdot \exp(v f_{\ell,jk}) = \exp(v(s^{j-2\ell}) f_{\ell,jk}).
    \]

    The action of $\exp(ue_{-1})$ on $f_{\ell,jk}$ is locally nilpotent and is given by the   Baker--Campbell--Hausdorff formula:
    \[
    \exp(ue_{-1}) \cdot \exp(v f_{\ell,jk}) = \left( \prod_{\alpha \in \R'(\ell,j,k)} \exp(c_\alpha(u,v) f_\alpha) \right) \cdot \exp(v f_{\ell,jk})
    \]
where  $c_{\alpha}(u,v)$ is a constant depending on $\a$, $u$ and $v$.    The root set $R'(\ell,j,k)$ consists of roots of the form $(a - b\ell)\alpha_{-1} - b\alpha_{jk}$ for $j \ge 1$, $1 \le k \le c_g(1, \frac{j}{N})$ and $0 \le \ell \le j$.

The action of $\mathrm{ad}\, f_{-1}$ on on $f_{\ell,jk}$ is also locally nilpotent and we have:
    \[
    \exp(uf_{-1}) \cdot \exp(v f_{\ell,jk}) = \left( \prod_{\alpha \in S^\prime(\ell,j,k)} \exp(d_\alpha(u,v) f_\alpha) \right) \cdot \exp(v f_{\ell,jk})
    \]
   where $d_{\alpha}(u,v)$ is a constant depending on $\a$, $u$ and $v$. The root set $S^\prime(\ell,j,k)$ consists of roots of the form $-(a + b\ell)\alpha_{-1} - b\alpha_{jk}$ for $j \ge 1$,  $1 \le k \le c_g(1, \frac{j}{N})$ and $0 \le \ell \le j$.

\subsection{Example: A Borcherds algebra from a hyperbolic Kac--Moody root lattice}

    One class of examples can be created by adjoining one or more simple imaginary roots to a semi-simple or Kac--Moody algebra. We consider the following  example, which is obtained from adjoining one simple imaginary root to the hyperbolic Kac--Moody algebra $H(3)$ which has  generalized Cartan matrix
    $$A = \begin{pmatrix}   2 & -3 \\ -3 &  ~2   \end{pmatrix}$$  
to obtain the Borcherds algebra $ \mathfrak g(B)$ corresponding to the rank $2$ matrix $$B = \begin{pmatrix}   2 & -3 &   -1 \\ -3 &  ~2 & -1  \\    -1 & -1& -2  \  \end{pmatrix}.$$  

 We denote the simple roots of the resulting Borcherds algebra as $\alpha_1, \alpha_2, \alpha_3$. Note that the matrix has rank 2, so we extend the Cartan subalgebra $\mathfrak{h}$ of $\mathfrak g(B)$ to include a degree derivation $D_3$ satisfying $D_3(\alpha_i) = \delta_{(3,i)}$. Let $J= \{ 1, 2\}$, so that $\mathfrak{g}_J= H(3)$. We write
$\mathfrak{g}_J=\mathfrak{n}_J^-\oplus \mathfrak{h}_J\oplus \mathfrak{n}_J^+$ for the triangular decomposition of $\mathfrak{g}_J$.

We note the following: when restricted to the Cartan subalgebra $\mathfrak{h}_J$, $\alpha_1$ and $\alpha_2$ are the simple roots of the hyperbolic algebra $H(3)$.
When restricted to the root lattice of $\mathfrak g_J=H(3)$, the root  $\alpha_{3} =   \alpha_1 +  \alpha_2$ is  an imaginary root in the root lattice of $H(3)$, but is a simple imaginary root of the Borcherds algebra $\mathfrak g(B)$.
As elements of $[(\mathfrak h)^e]^* = (\delta_{(3,i)} \ltimes \mathfrak h)^*$ the roots $\alpha_1, \alpha_2, \alpha_{3}$ are linearly independent.

We define $\mathfrak u^-$ to be the free Lie algebra $L(V')$ where $V'=U(\mathfrak n_J^-)\cdot f_{\a_3}$, where $f_{\a_3} $ is a root vector for $-\a_3$ in $\mathfrak g(B)$ and $\mathfrak u^+$ to be the free Lie algebra $L(V)$ where $V=U(\mathfrak n_J^+)\cdot e_{\a_3}$  and $e_{\a_3} $ is a root vector for $\a_3$ in $\mathfrak g(B)$.

The module $V'$ is an integrable highest weight module for  $\mathfrak g_J=H(3)$ with highest weight vector $f_{\alpha_3}$. 
Poincar\'e--Birkoff-Witt basis elements of  $V'$ are of the form $v = f_{i_1}^{m_1} f_{i_2}^{m_2}\dots f_{i_n}^{m_n} \cdot f_{\alpha_3}$ where $f_{i_j}^{m_j}\in \mathfrak n_J^-$.

Let $S^\prime$ a basis of $V^\prime$. We define $G(S^\prime)=\exp(\widehat{L}(S^\prime))$. As in \cite{Ti87},  the Kac--Moody group $G_J$ of  $H(3)$ is generated by the set 
$\{\chi_{\alpha}(u)\mid \alpha\in \Delta_{\re}, u\in\K\}$ satisfying Tits'
relations (R1)-(R7) below, where we identify $\chi_{\alpha}(u)$ with $\exp\ad (ue_\alpha)$ and $e_\alpha\in\mathfrak g_\a$ is a root vector corresponding to $\a\in \Delta_{\re}$. In (R1)-(R7) below, 
$i,j$ are elements of $I=\{1,2\}$,
$u,v\in\K$, $s,t\in\K^\times$
and $\alpha$ and $\beta$ are real roots.

(R1) $\chi_{\alpha}(u+v)=\chi_{\alpha}(u)\chi_{\alpha}(v)$;

(R2) Let $(\alpha,\beta)$ be a \it{prenilpotent pair, }\rm
that is, there exist $w,\ w'\in W$ such that
$$w\alpha, \ w\beta\in\Delta_{\re}^+{\text{ and }}w'\alpha, \ w'\beta\in\Delta_{\re}^-.$$
Then
\[
	[\chi_{\alpha}(u),\chi_{\beta}(v)]
	=
	\prod_{m,n \geq 1}
	\chi_{m\alpha+n\beta}(C_{mn\alpha\beta}u^m v^n)
\]
where the product on the right-hand side is taken over all real roots
of the form $m\alpha+n\beta$, $m,n\geq 1$, in some fixed order, and
$C_{mn\alpha\beta}$ are integers independent of $\K$.

Set

$\chi_{\pm i}(u)=\chi_{\pm \alpha_i}(u)$, where $\chi_{ i}(u)=\exp\ad(ue_i)$ and $\chi_{ -i}(u)=\exp\ad(uf_i)$

$\wgal_{i}(s)=\chi_{i}(s)\chi_{-i}(-s^{-1})\chi_{i}(s)$,

$\wgal_{i}=\wgal_{i}(1)$ and $h_{i}(s)=\wgal_{i}(s)\wgal_{i}^{-1}$.

\noindent The remaining relations are

(R3) $\wgal_{i}\chi_{\alpha}(u)\wgal_{i}^{-1}=
\chi_{w_{i}\alpha}(\eta_{\alpha,i}u)$,

(R4)
$h_{i}(s)\chi_{\alpha}(v)h_{i}(s)^{-1}=\chi_{\alpha}(vs^{\langle\alpha,\alpha_i^{
\vee}\rangle})$, 

(R5) $\wgal_{i}h_j(s)\wgal_i^{-1}=h_j(s)h_i(s^{-a_{ji}})$,

(R6) $h_{i}(st)=h_{i}(s)h_{i}(t)$, and

(R7) $[h_{i}(s),h_{j}(t)]=1$.

In \cite[Proposition 2.3]{KP}, it is shown that a pair
$\{\alpha,\beta\}$ is prenilpotent
if and only if $\alpha\neq -\beta$ and $|(\Z_{>0}\alpha +\mathbb
Z_{>0}\beta)\cap \Delta_{\re}|<\infty$.
Thus the product on the right-hand side of (R2) is  finite.

In Figure~\ref{hyp}, the hyperbola supporting the real roots of $H(3)$ in $\mathfrak h^*\cong \R^{1,1}$ is indicated in blue. As shown in \cite{CKMS}, a pair of real roots $\a,\b$ is prenilpotent if and only if $\a,\b$ both lie on the same branch of the blue hyperbola.

 {Recall that since the module $V'$ is integrable, the action of $e_\alpha$ and $f_\alpha$ are locally nilpotent on $V'$ for real roots $\alpha$ in $\mathfrak g_J$. Thus for $L \in \widehat{L}(S^\prime)$, the sums $\exp\ad(ue_\alpha)(L)$ and $\exp\ad(uf_\alpha)(L)$ terminate and are well-defined elements of $\widehat{L}(S^\prime)$.

}
 
  For $ h_i(t)\in G_J$, the  element  $h_i\in\mathfrak g$ acts diagonally on the basis vectors of $V'$, which are weight vectors. If $v \in V'$ is a basis vector of weight $\lambda$, then $h_i \cdot v = \langle\lambda, \alpha_i\rangle v$. This translates to the action on the corresponding formal variable $s_v$ as $h_i \circ s_v = \langle\lambda, \alpha_i\rangle s_v$. The group action is then:
    \[
    \Ad(h_i(t))(s_v) = t^{\langle\lambda, \alpha_i^\vee \rangle} s_v
    \]
    where $\alpha_i^\vee$ is the coroot corresponding to $\a_i$.


We may now construct the semi-direct product $G=   G(S^\prime) \rtimes G_J $ where 
$ V'={ U}(\mathfrak n^-_J)\cdot f_{\a_3}$. The group
  $G_J$ is the Kac--Moody group of $\mathfrak g_J=H(3)$ given by the presentation above 
 and $G(S^\prime)= \exp(\widehat{L}(S^\prime))$. 
 Multiplication in $G$ is given by Lemma~\ref{mult}.

\begin{figure}[h!]\begin{center}
\includegraphics[scale=0.57]{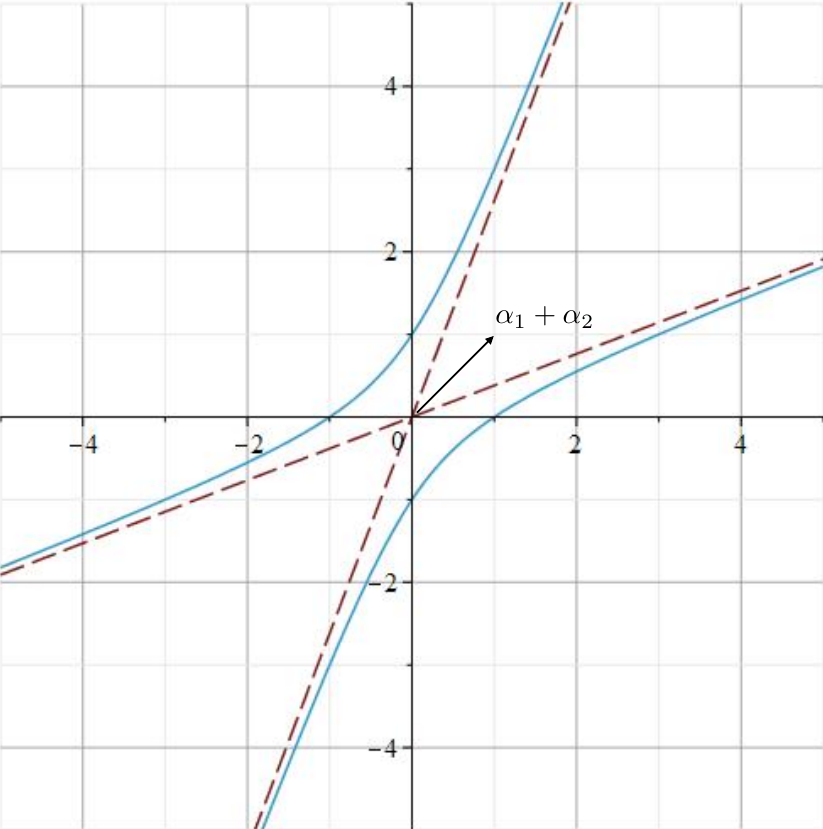}
\caption{The hyperbola supporting the real roots of $H(3)$ in $\mathfrak h^*\cong \R^{1,1}$ is indicated by the bold line curves. The asymptotes are indicated by the dashed lines. The imaginary root $\alpha_1+\alpha_2$ lies inside the imaginary cone.}\label{hyp}
\end{center}\end{figure}
In $H(3)$, the root $\alpha_1+\alpha_2$ is imaginary (see Figure~\ref{hyp}). Let 
Let $\frak{h}(H(3))$ be the Cartan subalgebra of $H(3)$. Recall that $\alpha_1$, $\alpha_2$ and $\alpha_3$ are the simple roots of  $\mathfrak g(A)$ and that $\alpha_3$ is imaginary.  We observe that when restricted to $\frak{h}(H(3))$, we have  $\alpha_{3} = \alpha_1 + \alpha_2$.









    Similar examples can be constructed by replacing the $-1$ entries in the generalized Cartan matrix $B$ with entries $-k$, $k>1$.

\subsection{A Borcherds algebra formed from $E_{10}$ and `missing modules'}


We next consider the 
Borcherds  algebra $\mathfrak{g}_{\rm{II}_{9,1}}$ of  \cite{GN95} and \cite{BGN98}, whose
maximal Kac–Moody subalgebra is the hyperbolic algebra $E_{10}$. 

The root lattice of $\mathfrak{g}_{\rm{II}_{9,1}}$ is the 10-dimensional even unimodular Lorentzian lattice $\rm{II}_{9,1}$. The root lattice of $E_{10}$ coincides with ${\rm II}_{9,1}$, however   $E_{10}$ is a certain subalgebra of $\mathfrak{g}_{\rm{II}_{9,1}}$ defined by a set of real simple roots, whereas the Borcherds algebra $\mathfrak{g}_{\rm{II}_{9,1}}$ has additional imaginary simple roots.




\begin{figure}
\begin{center}
\setlength{\unitlength}{1mm}
\begin{picture}(90,10)
   \put(0,0){\circle*{1.7}}
   \put(10,0){\circle*{1.7}}
   \put(20,0){\circle*{1.7}}
   \put(30,0){\circle*{1.7}}
   \put(40,0){\circle*{1.7}}
   \put(50,0){\circle*{1.7}}
   \put(60,0){\circle*{1.7}}
   \put(70,0){\circle*{1.7}}
   \put(80,0){\circle*{1.7}}
   \put(20,10){\circle*{1.7}}

   \put(0,0){\line(1,0){80}}
   \put(20,0){\line(0,1){10}}

   \put(-1.5,-4){$\alpha_{7}$}
   \put(8.5,-4){$\alpha_6$}
   \put(22,10){$\alpha_8$}
   \put(18.5,-4){$\alpha_5$}
   \put(28.5,-4){$\alpha_4$}
   \put(38.5,-4){$\alpha_3$}
   \put(48.5,-4){$\alpha_2$}
   \put(58.5,-4){$\alpha_1$}
   \put(68.5,-4){$\alpha_0$}
   \put(77.5,-4){$\alpha_{-1}$}
 
\end{picture}

\end{center}
\bigskip
  \caption{The Dynkin diagram of $E_{10}$}
  \label{fig:DynkinE10}
\end{figure}
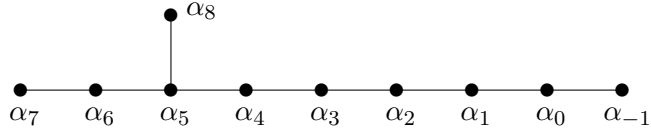

We first review the  details of the  $E_{10}$ root lattice. 
 Recall the $E_{10}$ Coxeter-Dynkin diagram shown in Figure \ref{fig:DynkinE10}. Throughout this section we will use the notation $E_{k}$ to refer to the $E_{k}$  lattice unless otherwise specified. The $E_{10}$ root lattice is $E_{10} = \oplus_{i=-1}^8 {\mathbb Z } \alpha_i$ where for all $-1 \leq i,j \leq 8$:

\begin{align*}
    (\alpha_i, \alpha_i) & = 2 \\
    (\alpha_i, \alpha_j) & = -1 \text{ if } i\neq j \text{ and the i-th and j-th node are joined by an edge} \\
     (\alpha_i, \alpha_i) & = 0 \text{ otherwise }.\\
\end{align*}

The vector 
 \[
\delta = \alpha_0 + 2\alpha_1 + 3\alpha_2 + 4 \alpha_3+ 5\alpha_4 + 6\alpha_5 + 4\alpha_6 + 2\alpha_7 + 3 \alpha_8.
  \]

is the null (also called isotropic) root of $E_9 \subset E_{10}$. For $i= 0, \ldots ,8 $ 
\[
(\alpha_i, \delta) = 0
\]
and 
\[
(\alpha_{-1}, \delta) = -1 .
\]

For a root $\lambda\in E_{10}$, we define its {\it level} $\ell$ to be $\ell= -\lambda \cdot \delta. 
$ The generalized Cartan matrix determined by the Dynkin diagram in Figure \ref{fig:DynkinE10} defines the hyperbolic Kac--Moody algebra $E_{10}$.

  The space of `missing modules' of  \cite{BGN98} coincides with the  subspace $M$ of $\mathfrak{g}_{\rm{II}_{9,1}}$. The subspace $M$ is the orthogonal complement of the extended Kac--Moody algebra $E_{10}$ with respect to the invariant bilinear form of $\mathfrak{g}_{\rm{II}_{9,1}}$, where $E_{10}$ is extended to incorporate the full Cartan subalgebra of $\mathfrak{g}_{\rm{II}_{9,1}}$. 
  
  The subspace $M$ is a module for $E_{10}$ under the adjoint action and $M$ can be uniquely decomposed into a direct sum of irreducible highest weight modules and/or lowest weight modules of $E_{10}$. Let  $\lambda$ be  a dominant integral weight (for highest weight modules) or a negative dominant integral weight (for lowest weight modules) of $\mathfrak{g}_{\rm{II}_{9,1}}$.

   The lowest weight vectors (or highest weight vectors ) that generate these irreducible $E_{10}$-modules in $M$ correspond to  specific imaginary simple roots of $\mathfrak{g}_{\rm{II}_{9,1}}$. These are the imaginary simple roots of $\mathfrak{g}_{\rm{II}_{9,1}}$ that are not simple root of the $E_{10}$ subalgebra.
   

We take the set of imaginary simple roots of $\mathfrak{g}_{\rm{II}_{9,1}}$ to be the set $\Lambda$ below. This is the set of level one imaginary simple roots in the fundamental Weyl chamber  as in  (\cite{BGN98} Proposition 1):

\[
\Lambda= \{\lambda_k = \alpha_{-1} +  (2+k) \delta \mid k \in \mathbb N\}.
\]


This gives rise to a   Borcherds Generalized Cartan matrix $A$ as follows:

\[ A=
\begin{bmatrix}
A_{E_{10}} & B \\
B^T & C
\end{bmatrix}
\]

where 

\[
A_{E_{10}} = \begin{bmatrix}
\begin{array}{ccccccccc|c}
2 & -1 & 0 & 0 & 0 & 0 & 0 & 0 & 0 & 0 \\
-1 & 2 & -1 & 0 & 0 & 0 & 0 & 0 & 0 & 0 \\
0 & -1 & 2 & -1 & 0 & 0 & 0 & 0 & 0 & 0 \\
0 & 0 & -1 & 2 & -1 & 0 & 0 & 0 & 0 & 0 \\
0 & 0 & 0 & -1 & 2 & -1 & 0 & 0 & 0 & 0 \\
0 & 0 & 0 & 0 & -1 & 2 & -1 & 0 & 0 & -1 \\
0 & 0 & 0 & 0 & 0 & -1 & 2 & -1 & 0 & 0 \\
0 & 0 & 0 & 0 & 0 & 0 & -1 & 2 & -1 & 0 \\
0 & 0 & 0 & 0 & 0 & 0 & 0 & -1 & 2 & 0 \\
\hline
0 & 0 & 0 & 0 & 0 & -1 & 0 & 0 & 0 & 2
\end{array}
\end{bmatrix}
\]

is the generalized Cartan matrix for the Kac--Moody algebra $E_{10}$,

\[ B= 
\begin{bmatrix}
0 & 1 & 2 & 3 & \ldots \\
-1 & -1 & -1 & -1 & \ldots\\
0 & 0 & 0 & 0 & \dots \\
\vdots & 
\end{bmatrix}
\]

and 
\begin{align*} C&= 
\begin{bmatrix}
-2 & -3  & -4 & -5 & \ldots \\
-3 & -4 &  -5 & -6  & \ldots\\
 -4 & -5 & -6 & -7 & \dots \\
\vdots & 
\end{bmatrix}. 
\end{align*}

The submatrices $B$ and $C$ are derived from 
lowest weight vectors (or highest weight vectors)    that generate $M$ and correspond to  specific imaginary simple roots of $\mathfrak{g}_{\rm{II}_{9,1}}$ that are not simple roots of $E_{10}$.

 As in the case of the Monster Lie algebra, the entries are in block form to reflect the dimensions of the simple imaginary root spaces. These are determined by the graded dimensions of the highest weight modules. A method for computing these multiplicities is derived from the fomula for the free Lie algebra and a method is given in \cite{GN95} where the multiplicities are denoted $ \mu(\Lambda)$. Note that here we are adjoining only the level one simple imaginary roots as their dimensions are determined. 
 

The Borcherds Cartan matrix $A$ satisfies the conditions of Theorem \ref{thm:free}. Let $\mathfrak{g}$ be the Borcherds algebra with the generalized Cartan matrix $A$. We set $\mathfrak{g}_J$ to be the Kac--Moody algebra $E_{10}$ generated by Chevalley generators $\{e_i, f_i, h_i\}_{i \in J}$, where $J = \{-1, 0, \ldots, 8\}$ corresponds to the real simple roots $\alpha_i$ for which $a_{ii}=2$.
       The subalgebra $\mathfrak{h}$ is the Cartan subalgebra of the full Borcherds algebra $\mathfrak{g}_{\rm{II}_{9,1}}$. It contains the Cartan subalgebra $\mathfrak{h}_J$ of $\mathfrak{g}_J$ and additional degree derivations $D$ such that $\mathfrak{h} = D \oplus \mathfrak{h}_J$. 
   
   We set $W = T(V')$
        where $V'$ is the direct sum of integrable highest weight $\mathfrak{g}_J$-modules: $V' = \bigoplus_{i \in I \setminus J} U(\mathfrak{n}_J^-) \cdot f_i$.
      
     The simple roots indexed by $I \setminus J $
     is the set $\{\lambda_k = \alpha_{-1} + (2+k)\delta \mid k \in \mathbb{N}\}$, which are the imaginary simple roots. Thus, $V' = \bigoplus_{k \in \mathbb{N}} V_{\lambda_k}$, where $V_{\lambda_k} = U(\mathfrak{n}_{E_{10}}^-) \cdot f_{\lambda_k}$ are irreducible highest weight $E_{10}$-modules with highest weight vector $f_{\lambda_k}$.

    Each $V_{\lambda_k}$, and thus $V'$, is  a $\mathfrak{g}_J$-module. The representation $\pi: \mathfrak{g}_J \to \mathfrak{gl}(T(V'))$ is the $\mathfrak{g}_J$-module action on $V'$ and respects the natural grading on $W$.


 The matrix $A$ satisfies conditions B1-B3. For distinct imaginary simple roots $\lambda_i, \lambda_j$ (corresponding to the $C$ block), $C_{ij} = i+j \neq 0$ for $i,j>0$. Hence no distinct imaginary simple roots are pairwise orthogonal, So Theorem 2.1 is applicable.

The group $G_J$ is a Kac--Moody group associated with $\mathfrak{g}_J = E_{10}$. This group is generated by elements of the form $\exp(\pi(x_\alpha))$ where $x_\alpha \in (\mathfrak{g}_J)_\alpha$ for real roots $\alpha \in \Delta_J$ of $E_{10}$. Tits' relations (R1)-(R7) are satisfied for these generators using the index set $I=\{1,2,\dots, 10\}$.

 The subalgebra $\mathfrak{u}^-$ is the free Lie algebra $L(V')$, where $V' = \bigoplus_{i \in I \setminus J} U(\mathfrak{n}_J^-) \cdot f_i$.
   Thus, $V' = \bigoplus_{k \in \mathbb{N}} U(\mathfrak{n}_{E_{10}}^-) \cdot f_{\lambda_k}$, where $f_{\lambda_k}$ are the Chevalley generators for the imaginary simple roots $\lambda_k$, and $\mathfrak{n}_{E_{10}}^-$ is the negative nilpotent part of $E_{10}$. Each $U(\mathfrak{n}_{E_{10}}^-) \cdot f_{\lambda_k}$ forms an integrable highest weight module for $E_{10}$. The basis elements $S^\prime$ for $V'$ are products of elements from $\mathfrak{n}_{E_{10}}^-$ acting on the highest weight vectors $f_{\lambda_k}$.

As in Subsection~\ref{MonsterEx}, we   construct $G = G(S^\prime) \rtimes G_J$
where $G(S^\prime) = \exp(\widehat{L}(S^\prime))$ and $\widehat{L}(S^\prime)$ is the completion of the free Lie algebra $\mathfrak u^- = L(S^\prime)$.

   Let $x_\alpha \in (\mathfrak{g}_J)_\alpha$ where $\alpha$ is a real root and $x_\alpha $ is a choice of root vector for $\alpha$. Since $V'$ is an integrable $\mathfrak{g}_J$-module (Corollary~\ref{integrable}),   the adjoint action $\text{ad}(x_\alpha)$ is locally nilpotent on $V'$. Hence the series $\exp(\text{ad}(x))(y)$ terminates for any $y \in \widehat{{L}}(S^\prime)$
   
   For elements $h \in \mathfrak{h}_J$, $\text{ad}(h)$ acts diagonally on weight vectors, so $\exp(\text{ad}(h))(y)$ is also well-defined.
   
 Multiplication in $G$ is given by Lemma~\ref{mult}.



\subsection{The gnome Lie algebra}

The gnome Lie algebra,  introduced and studied in \cite{BGN98} and denoted by $\mathfrak{g}_{\mathrm{II}_{1,1}}$, is a Borcherds  algebra formed by taking the Lie algebra of the physical space of the conformal vertex algebra associated with the even unimodular Lorentzian lattice
\[
\mathrm{II}_{1,1} = \left\{ ( m, n )\in \mathbb{Z}\times\Z \mid( m, n) \cdot( m', n') = -m n' - m' n \right\}
\]
denoted in Minkowski coordinates. This lattice can also be realized in a light-cone basis, where elements are represented as pairs $\langle\ell, n\rangle \in \mathbb{Z} \oplus \mathbb{Z}$ with inner product matrix 
$\begin{pmatrix} ~0 & -1\\-1 & ~0\end{pmatrix}$
so that $\langle \ell,n\rangle^2 = -2\ell n$.

 The Cartan subalgebra of $\mathfrak g_{{\rm II}_{1,1}}$ is \( \mathfrak{h} \cong \mathbb{R}^{1,1} \) and 
 $\mathfrak g_{{\rm II}_{1,1}}$ admits a root space decomposition into a direct sum of finite-dimensional subspaces. 
 The root system contains  one real root \( \alpha_{-1} =\langle 1, -1\rangle \), which has an associated $\mathfrak{sl}_2$-triple. 
All other simple roots are imaginary. The Borcherds Cartan matrix of the Borcherds algebra formed by including the `missing' modules of \cite{BGN98} has the same entries as the Borcherds Cartan matrix of the monster Lie algebra, with different root multiplicities.

We define, in the light cone basis:
\[
\delta :=\langle 0, 1\rangle.
\]

Then \( \delta \cdot \delta = 0 \) and  \( \alpha_{-1} \cdot \delta = -1 \), where \( \alpha_{-1} =\langle 1, -1 \rangle \) is the  real simple root.
The element \( \delta \) is in the root lattice but is not a root. However, it plays a role of a Weyl vector and plays a role in the  grading of  \( \mathfrak{g}_{\mathrm{II}_{1,1}} \). Let ${\rm II}_{\mathrm{im}}$ denote the imaginary simple roots. 

The following lemma appears in \cite{BGN98} in the summary of the properties of the root lattice:
\begin{lemma}
    A set of simple imaginary roots for the gnome Lie algebra $\mathfrak{g}_{\mathrm{II}_{1,1}}$ is given by  ${\rm II}_{\mathrm{im}}=\{ \langle \ell ,n  \rangle \mid n \geq \ell \geq 1\}$.
    
\end{lemma}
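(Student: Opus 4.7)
The plan is to verify the Borcherds simple-root axioms for the proposed set in three short geometric steps and then appeal to the multiplicity analysis of \cite{BGN98} for exhaustiveness.

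First I would identify the fundamental Weyl chamber. Since the only real simple root is $\alpha_{-1} = \langle 1, -1 \rangle$, the chamber condition on a vector $v = \langle \ell, n \rangle$ is $v \cdot \alpha_{-1} \leq 0$. Using the light-cone pairing $\langle \ell, n \rangle \cdot \langle \ell', n' \rangle = -\ell n' - \ell' n$ recorded earlier in the subsection, this evaluates to $v \cdot \alpha_{-1} = \ell - n$, so the chamber condition becomes $n \geq \ell$.

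Next I would verify the remaining Borcherds axioms for imaginary simple roots on the candidate set. For any $v = \langle \ell, n \rangle$ with $n \geq \ell \geq 1$, the self-pairing $v^2 = -2\ell n < 0$ shows that $v$ is imaginary. For any two such vectors $v$ and $v' = \langle \ell', n' \rangle$ in the set, the pairing $v \cdot v' = -\ell n' - \ell' n \leq -2$ is strictly negative, so no two candidates are mutually orthogonal; this is also consistent with the hypothesis of Theorem~\ref{JurThm} under which our group construction applies. The inner product $\ell - n \leq 0$ with $\alpha_{-1}$ is non-positive, so all pairwise sign constraints for a Borcherds simple-root system are met.

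Finally, to see that the set is exhaustive, I would invoke the Borcherds denominator identity for $\mathfrak{g}_{\rm II_{1,1}}$ derived in \cite{BGN98} from the no-ghost theorem applied to the vertex algebra of $\rm II_{1,1}$. The simple imaginary roots can be read off from the coefficients of the Borcherds product, and the lattice vectors appearing with positive simple-root multiplicity are precisely $\{\langle \ell, n\rangle : n \geq \ell \geq 1\}$, confirming that the candidate set is both correct and complete.

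The main obstacle is the third step: axiom-checking for each candidate is immediate from the lattice arithmetic, but showing that no further vector serves as a simple imaginary root requires the full Borcherds product computation. I would cite \cite{BGN98} for this point rather than reproduce the computation here, since the surrounding discussion already treats \cite{BGN98} as the standard reference for $\mathfrak{g}_{\rm II_{1,1}}$.
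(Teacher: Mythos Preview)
Your computations are correct: with the light-cone form $\langle \ell,n\rangle\cdot\langle \ell',n'\rangle=-\ell n'-\ell' n$ one indeed gets $v\cdot\alpha_{-1}=\ell-n$, $v^2=-2\ell n<0$, and $v\cdot v'\le -2$ for all candidates, so the Borcherds sign conditions hold. Your acknowledgment that exhaustiveness requires the denominator-identity computation of \cite{BGN98} is also the right diagnosis.

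As for comparison with the paper: the paper does not supply its own proof of this lemma at all. It simply records the statement and attributes it to \cite{BGN98} (``The following lemma appears in \cite{BGN98} in the summary of the properties of the root lattice''). Your proposal therefore goes beyond what the paper does, by making the elementary chamber and sign verifications explicit before deferring to \cite{BGN98} for the nontrivial completeness step. There is nothing to correct; if anything, your write-up is more self-contained than the paper's treatment.
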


The Weyl group is \( \mathbb{Z}/2\Z \), generated by reflection in \( \alpha_{-1} \).
As above, every imaginary simple root \( \alpha \) in the fundamental Weyl chamber satisfies:
    \[
    \alpha = m \alpha_{-1} + n \delta =\langle m, n - m\rangle, \quad \text{with }  n > m>0.
    \]
Each such root \( \alpha \) defines a highest weight vector \( f_\alpha \in \mathfrak{g}_{\mathrm{II}_{1,1}} \)
and each root space \( \mathfrak{g}_\alpha \) for a root \( \alpha =\langle m, n - m\rangle \in \Delta \) has dimension
\[
\dim \mathfrak{g}_\alpha = {\rm II}_1(1 + m n),
\]
where \( {\rm II}_1(n) = p_1(n) - p_1(n-1) \), and \( p_1(n) \) is the number of partitions of \( n \) into positive integers.
 
 Free subalgebras of $\mathfrak{g}_{{\rm II}_{1,1}}$ were identified in \cite{BGN98}: the imaginary simple roots are not mutually orthogonal, that is \( \alpha \cdot \alpha' < 0 \) for distinct \( \alpha, \alpha' \in {\rm II}_{\mathrm{im}} \), and hence, by Theorem 5, of \cite{BGN98} and Theorem \ref{JurThm} the subalgebras

\begin{equation}
\mathfrak{u}_{-} =  \coprod_{\alpha} m_\alpha U (\mathfrak g_J)f_{-\alpha}, \quad \alpha \in {\rm II}_{\mathrm{im}}  , \label{eq:gnomeFreeSub}
\end{equation}

\[
\mathfrak{u}_+ =  \coprod_{\alpha} m_\alpha U (\mathfrak g_J)e_\alpha, \quad \alpha \in {\rm II}_{\mathrm{im}}  
\]

are free Lie algebras generated by irreducible highest (respectively lowest) weight vectors given above with muliplicities $m_{\alpha}$.



Similarly, the subalgebra \( \mathcal{H}_- \subset \mathfrak{g}_{\mathrm{II}_{1,1}} \), generated by highest weight vectors \( f_\alpha = \theta(e_\alpha) \), is also a free Lie algebra:
\[
\mathfrak{u}_- = \mathrm{L}\left( \{ f_\alpha \mid r \in {\rm II}_{\mathrm{im}} \} \right).
\]

We have
$$\mathfrak g_J=\mathfrak{sl}_2+\mathfrak h_{{\rm II}_{1,1}}\cong \mathfrak{gl}_2.$$
The vectors $\langle 1,-1\rangle$ and 
$\langle 1,-1\rangle +2\delta$ span $\mathfrak h_{{\rm II}_{1,1}}$. By Theorem~\ref{GroupConstr} we can construct a group associated to $\mathfrak g$. The set $J$ is the index set corresponding to the real root $\alpha_{-1}$. The Cartan subalgebra of $\mathfrak{g}_J$ is $\mathfrak{h}_J \subset \mathfrak{h}$.   The set $I \setminus J$ is set of indices corresponds to the imaginary simple roots of $\mathfrak g_{{\rm II}_{1,1}}$. These are the vectors $\langle \ell, n\rangle$ in light-cone coordinates such that $n \geq \ell \geq 1$ and $\ell \alpha_{-1} + (n-\ell)\delta\in II_{1,1}$ 
Let $\alpha$ be an imaginary simple root with (negative) corresponding Chevalley generator denoted by~$f_{-\alpha}$.
      
   The Lie algebra $\mathfrak g_{{\rm II}_{1,1}}$ admits a decomposition:
    $$ \mathfrak g_{{\rm II}_{1,1}} = \mathfrak{u}^+ \oplus (\mathfrak{g}_J + \mathfrak{h}) \oplus \mathfrak{u}^- $$
    where $\mathfrak{g}_J = \mathfrak{sl}_2$. 
The structure of $\mathfrak{u}^-$ is given by the formula $\mathfrak{u}_{-} = \coprod_{\alpha \in {\rm II}_{\mathrm{im}}} m_\alpha U (\mathfrak g_J)f_{-\alpha}$, where $m_\alpha$ is the multiplicity of the imaginary simple root $\alpha$.
      This multiplicity is denoted by $\mu_{\ell,n}$ in \cite{BGN98}, page 23 and is given by the coefficient of the term $x^n y^{\ell-1}$ in the expansion of the Weyl--Kac--Borcherds denominator formula (Equation 3.5 in \cite{BGN98}).

The module     $U(\mathfrak g_J)\cdot f_{-\alpha}$ represents the highest weight module for $\mathfrak{g}_J = \mathfrak{sl}_2$ generated by the Chevalley generator $f_{-\alpha}$. Since $f_{-\alpha}$ is a highest weight vector with respect to $\mathfrak{sl}_2$ (that is,  $e_{-1} \cdot f_{-\alpha} = 0$), this module is spanned by elements of the form $(f_{-1})^t \cdot f_{-\alpha}$ for $t \ge 0$.
    The direct sum $\coprod_{\alpha \in {\rm II}_{\mathrm{im}}} m_\alpha U (\mathfrak g_J)f_{-\alpha}$ accounts for the multiple copies of these modules when $m_\alpha > 1$.
     The subalgebra  $\mathfrak{u}^-$ is the free Lie algebra $L(V')$, where $V'$ is the vector space defined by this direct sum.

   As before, the tensor algebra $W=T(V')$ is a module for the group $G_J\cong SL_2(\mathbb{K})$. The group $G_J$ acts on $W=T(V')$ via the representation $\pi: \mathfrak{g}_J \to \mathfrak{gl}(W)$.

    Take $S^\prime$ to be the basis for $V'$  formed by taking a basis for each copy of the $\mathfrak{sl}_2$-module $U(\mathfrak g_J)f_{-\alpha}$. Specifically, for each $\alpha=(\ell,n) \in {\rm II}_{\mathrm{im}}$ and each of its $m_\alpha$ copies, we include elements of the form $(f_{-1})^t\cdot f_{-\alpha, k}$ (where $t \ge 0$ and $k=1, \dots, m_\alpha$ indexes the copy).
    The group  $G(S^\prime)$ is the Magnus group $G(S^\prime) = \exp(\widehat{L}(S^\prime))$.
The  semi-direct product group $G$ is:
    $$ G = G(S^\prime) \rtimes G_J. $$
     Multiplication in $G$ is given by Lemma~\ref{mult}.

\bibliographystyle{amsalpha}
\bibliography{MagnusGroup}{}

@article {CJM,
    AUTHOR = {Carbone, Lisa and Jurisich, Elizabeth and Murray, Scott H.},
     TITLE = {Constructing a {L}ie group analog for the {M}onster {L}ie
              algebra},
   JOURNAL = {Lett. Math. Phys.},
  FJOURNAL = {Letters in Mathematical Physics},
    VOLUME = {112},
      YEAR = {2022},
    NUMBER = {3},
     PAGES = {Paper No. 43, 16},
      ISSN = {0377-9017},
   MRCLASS = {17B67 (11F22 20D08 22E65)},
  MRNUMBER = {4418318},
       DOI = {10.1007/s11005-022-01531-4},
       URL = {https://doi-org.proxy.libraries.rutgers.edu/10.1007/s11005-022-01531-4},
}

@article {CarFricke,
    AUTHOR = {Carnahan, Scott},
     TITLE = {Fricke {L}ie algebras and the genus zero property in
              {M}oonshine},
   JOURNAL = {J. Phys. A},
  FJOURNAL = {Journal of Physics. A. Mathematical and Theoretical},
    VOLUME = {50},
      YEAR = {2017},
    NUMBER = {40},
     PAGES = {404002, 21},
      ISSN = {1751-8113,1751-8121},
   MRCLASS = {17B69 (17B67)},
  MRNUMBER = {3708084},
MRREVIEWER = {Shintarou\ Yanagida},
       DOI = {10.1088/1751-8121/aa781d},
       URL = {https://doi.org/10.1088/1751-8121/aa781d},
}

@article {CarDuke,
    AUTHOR = {Carnahan, Scott},
     TITLE = {Generalized moonshine, {II}: {B}orcherds products},
   JOURNAL = {Duke Math. J.},
  FJOURNAL = {Duke Mathematical Journal},
    VOLUME = {161},
      YEAR = {2012},
    NUMBER = {5},
     PAGES = {893--950},
      ISSN = {0012-7094,1547-7398},
   MRCLASS = {17B69 (11F22)},
  MRNUMBER = {2904095},
MRREVIEWER = {Rainer\ Schulze-Pillot},
       DOI = {10.1215/00127094-1548416},
       URL = {https://doi.org/10.1215/00127094-1548416},
}

@article{BoInvent,
	Author = {Borcherds, Richard E.},
	Fjournal = {Inventiones Mathematicae},
	Issn = {0020-9910},
	Journal = {Invent. Math.},
	Mrclass = {11F22 (17A70 17B67 20D08)},
	Mrnumber = {1172696},
	Mrreviewer = {Steven N. Kass},
	Number = {2},
	Pages = {405--444},
	Title = {Monstrous moonshine and monstrous {L}ie superalgebras},
	Url = {https://doi.org/10.1007/BF01232032},
	Volume = {109},
	Year = {1992},
	Bdsk-Url-1 = {https://doi.org/10.1007/BF01232032}
	
	}

@article {B88,
    AUTHOR = {Borcherds, Richard},
     TITLE = {Generalized {K}ac-{M}oody algebras},
   JOURNAL = {J. Algebra},
  FJOURNAL = {Journal of Algebra},
    VOLUME = {115},
      YEAR = {1988},
    NUMBER = {2},
     PAGES = {501--512},
      ISSN = {0021-8693},
   MRCLASS = {17B05 (17B10 17B67)},
  MRNUMBER = {943273},
MRREVIEWER = {Chang Kun Liu},
       DOI = {10.1016/0021-8693(88)90275-X},
       URL = {https://doi-org.nuncio.cofc.edu/10.1016/0021-8693(88)90275-X},
       
       }

@book{Ba93,
	Author = {Bahturin, Yuri},
	Isbn = {0-7923-2459-5},
	Mrclass = {00A05 (08-01 13-01 16-01 17-01 20-01)},
	Mrnumber = {1265937},
	Mrreviewer = {T. W. Hungerford},
	Pages = {x+419},
	Publisher = {Kluwer Academic Publishers Group, Dordrecht},
	Series = {Mathematics and its Applications},
	Title = {Basic structures of modern algebra},
	Url = {https://doi-org.proxy.libraries.rutgers.edu/10.1007/978-94-017-0839-5},
	Volume = {265},
	Year = {1993},
	Bdsk-Url-1 = {https://doi-org.proxy.libraries.rutgers.edu/10.1007/978-94-017-0839-5}}

@article {CKMS,
    AUTHOR = {Carbone, Lisa and Kownacki, Matt and Murray, Scott H. and
              Srinivasan, Sowmya},
     TITLE = {Commutator relations and structure constants for rank 2
              {K}ac-{M}oody algebras},
   JOURNAL = {J. Algebra},
  FJOURNAL = {Journal of Algebra},
    VOLUME = {566},
      YEAR = {2021},
     PAGES = {443--476},
      ISSN = {0021-8693,1090-266X},
   MRCLASS = {17B67 (17B22 81R10)},
  MRNUMBER = {4166048},
MRREVIEWER = {Tomislav\ \v Siki\'c},
       DOI = {10.1016/j.jalgebra.2020.08.023},
       URL = {https://doi.org/10.1016/j.jalgebra.2020.08.023},
}

@article {CG,
    AUTHOR = {Carbone, Lisa and Garland, Howard},
     TITLE = {Existence of lattices in {K}ac-{M}oody groups over finite
              fields},
   JOURNAL = {Commun. Contemp. Math.},
  FJOURNAL = {Communications in Contemporary Mathematics},
    VOLUME = {5},
      YEAR = {2003},
    NUMBER = {5},
     PAGES = {813--867},
      ISSN = {0219-1997,1793-6683},
   MRCLASS = {17B67 (20E42 22E40 22F50)},
  MRNUMBER = {2017720},
MRREVIEWER = {Guy\ Rousseau},
       DOI = {10.1142/S0219199703001117},
       URL = {https://doi.org/10.1142/S0219199703001117},
}

@article{JurJPAA,
	Author = {Jurisich, Elizabeth},
	Doi = {10.1016/S0022-4049(96)00142-9},
	Fjournal = {Journal of Pure and Applied Algebra},
	Issn = {0022-4049},
	Journal = {J. Pure Appl. Algebra},
	Mrclass = {17B67 (17B01)},
	Mrnumber = {1600542},
	Mrreviewer = {Chong Ying Dong},
	Number = {1-3},
	Pages = {233--266},
	Title = {Generalized {K}ac-{M}oody {L}ie algebras, free {L}ie algebras and the structure of the {M}onster {L}ie algebra},
	Url = {http://dx.doi.org/10.1016/S0022-4049(96)00142-9},
	Volume = {126},
	Year = {1998},
	Bdsk-Url-1 = {http://dx.doi.org/10.1016/S0022-4049(96)00142-9}}

@article{JLW,
	Author = {Jurisich, E. and Lepowsky, J. and Wilson, R. L.},
	Doi = {10.1007/BF01614075},
	Fjournal = {Selecta Mathematica. New Series},
	Issn = {1022-1824},
	Journal = {Selecta Math. (N.S.)},
	Mrclass = {17B67 (11F22)},
	Mrnumber = {1327230},
	Mrreviewer = {Mirko Primc},
	Number = {1},
	Pages = {129--161},
	Title = {Realizations of the {M}onster {L}ie algebra},
	Url = {http://dx.doi.org/10.1007/BF01614075},
	Volume = {1},
	Year = {1995},
	Bdsk-Url-1 = {http://dx.doi.org/10.1007/BF01614075}}

@article{Jur96,
  title={An exposition of generalized Kac-Moody algebras},
  author={Jurisich, Elizabeth},
  journal={Contemporary Mathematics},
  volume={194},
  pages={121--160},
  year={1996},
  publisher={American Mathematical Society}
}

@incollection {KP,
    AUTHOR = {Kac, V. G. and Peterson, D. H.},
     TITLE = {Defining relations of certain infinite-dimensional groups},
      NOTE = {The mathematical heritage of \'Elie Cartan (Lyon, 1984)},
   JOURNAL = {Ast\'erisque},
  FJOURNAL = {Ast\'erisque},
      YEAR = {1985},
     PAGES = {165--208},
      ISSN = {0303-1179,2492-5926},
   MRCLASS = {22E65 (17B67)},
  MRNUMBER = {837201},
MRREVIEWER = {A.\ Verona},
}

@book{Kac90,
	Address = {Cambridge},
	Author = {Kac, Victor G.},
	Edition = {Third},
	Isbn = {0-521-37215-1; 0-521-46693-8},
	Mrclass = {17B65 (17B67 17B68 58F07)},
	Mrnumber = {92k:17038},
	Pages = {xxii+400},
	Publisher = {Cambridge University Press},
	Title = {Infinite-dimensional {L}ie algebras},
	Year = {1990}}

@book{MP95,
	Address = {New York},
	Author = {Moody, Robert V. and Pianzola, Arturo},
	Isbn = {0-471-63304-6},
	Mrclass = {17B67 (17-02)},
	Mrnumber = {96d:17025},
	Mrreviewer = {Mirko Primc},
	Note = {A Wiley-Interscience Publication},
	Pages = {xxii+685},
	Publisher = {John Wiley \& Sons Inc.},
	Series = {Canadian Mathematical Society Series of Monographs and Advanced Texts},
	Title = {Lie algebras with triangular decompositions},
	Year = {1995}}

@article{Ti87,
	Author = {Tits, Jacques},
	Fjournal = {Journal of Algebra},
	Issn = {0021-8693},
	Journal = {J. Algebra},
	Mrclass = {17B67 (17B20 20G15 20H10)},
	Mrnumber = {873684},
	Mrreviewer = {James F. Hurley},
	Number = {2},
	Pages = {542--573},
	Title = {Uniqueness and presentation of {K}ac-{M}oody groups over fields},
	Url = {https://doi-org.proxy.libraries.rutgers.edu/10.1016/0021-8693(87)90214-6},
	Volume = {105},
	Year = {1987},
	Bdsk-Url-1 = {https://doi-org.proxy.libraries.rutgers.edu/10.1016/0021-8693(87)90214-6}}

@article{JURISICH2004149,
title = {A resolution for standard modules of Borcherds Lie algebras},
journal = {Journal of Pure and Applied Algebra},
volume = {192},
number = {1},
pages = {149-158},
year = {2004},
issn = {0022-4049},
doi = {https://doi.org/10.1016/j.jpaa.2004.01.004},
url = {https://www.sciencedirect.com/science/article/pii/S0022404904000155},
author = {Elizabeth Jurisich},
abstract = {In this paper we construct a Bernstein–Gelfand–Gelfand-type resolution of a standard (irreducible) module for a Borcherds Lie algebra g. The resolution involves generalized Verma modules; the “reductive part” of the “parabolic subalgebra” of g may be a Borcherds algebra. This resolution, which generalizes that of Garland and Lepowsky for Kac–Moody algebras, is constructed here in sufficient generality to include the cases of the monster and fake monster Lie algebras. These algebras are of particular interest because of their connection to vertex algebras and conformal field theory. The monster Lie algebra is of importance because it is constructed from the moonshine module vertex algebra, appearing in the study of “monstrous moonshine” relating to the Fischer–Griess Monster simple group. The resolution constructed in this paper is used to obtain Kostant's homology formula for Borcherds algebras, and the character formula for a standard module.}
}

@book {MR0422434,
    AUTHOR = {Magnus, Wilhelm and Karrass, Abraham and Solitar, Donald},
     TITLE = {Combinatorial group theory},
   EDITION = {revised},
      NOTE = {Presentations of groups in terms of generators and relations},
 PUBLISHER = {Dover Publications, Inc., New York},
      YEAR = {1976},
     PAGES = {xii+444},
   MRCLASS = {20F05},
  MRNUMBER = {422434},
}

@article {MR1203518,
    AUTHOR = {Jacobson, N.},
     TITLE = {Magnus' method in the theory of free groups},
   JOURNAL = {Ulam Quart.},
  FJOURNAL = {Ulam Quarterly},
    VOLUME = {1},
      YEAR = {1992},
    NUMBER = {1},
     PAGES = {1ff., approx. 11 pp. (electronic only)},
      ISSN = {1068-6010},
   MRCLASS = {20E05 (20F14 20F40 20F50)},
  MRNUMBER = {1203518},
MRREVIEWER = {Julian\ Petresco},
}

@article {BGN98,
    AUTHOR = {B\"arwald, O. and Gebert, R. W. and G\"unaydin, M. and
              Nicolai, H.},
     TITLE = {Missing modules, the gnome {L}ie algebra, and {$E_{10}$}},
   JOURNAL = {Comm. Math. Phys.},
  FJOURNAL = {Communications in Mathematical Physics},
    VOLUME = {195},
      YEAR = {1998},
    NUMBER = {1},
     PAGES = {29--65},
      ISSN = {0010-3616,1432-0916},
   MRCLASS = {17B67 (17B69 81R10)},
  MRNUMBER = {1637397},
MRREVIEWER = {Kilian\ Koepsell},
       DOI = {10.1007/s002200050378},
       URL = {https://doi.org/10.1007/s002200050378},
}

@incollection {GN95,
    AUTHOR = {Gebert, R. W. and Nicolai, H.},
     TITLE = {{$E_{10}$} for beginners},
 BOOKTITLE = {Strings and symmetries ({I}stanbul, 1994)},
    SERIES = {Lecture Notes in Phys.},
    VOLUME = {447},
     PAGES = {197--210},
 PUBLISHER = {Springer, Berlin},
      YEAR = {1995},
      ISBN = {3-540-59163-X},
   MRCLASS = {17B67 (17B69 81R10)},
  MRNUMBER = {1356246},
MRREVIEWER = {J\"urgen\ Schulze},
       DOI = {10.1007/3-540-59163-X\_269},
       URL = {https://doi.org/10.1007/3-540-59163-X_269},
}

\end{document}